\newtheorem{theorem}{Theorem}[section]
\newtheorem{lemma}[theorem]{Lemma}
\newtheorem{claim}[theorem]{Claim}
\newtheorem{corollary}[theorem]{Corollary}
\newtheorem{conjecture}[theorem]{Conjecture}
\newtheorem*{problem}{Problem}
\newtheorem{definition}{Definition}
\newcommand{\ff}{\mathcal{F}}
\newcommand{\suchthat}{\;\ifnum\currentgrouptype=16 \middle\fi|\;}
\def\rr{\mathds{R}}
\DeclareMathOperator{\tr}{tr}
\DeclareMathOperator{\conv}{conv}
\DeclareMathOperator{\vol}{vol}
\DeclareMathOperator{\dist}{dist}
\DeclareMathOperator{\boxx}{box}
\tikzset{cross/.style={cross out, draw=black, minimum size=2*(#1-\pgflinewidth), inner sep=0pt, outer sep=0pt},
cross/.default={1pt}}
\title{Colorful and Quantitative Variations of Krasnosselsky's Theorem}
\author{Connor Donovan}
\address{Ursinus College. Collegeville, PA.}
\email{codonovan@ursinus.edu}
\author{Danielle Paulson}
\address{Harvard University. Cambridge, MA.}
\email{dpaulson@college.harvard.edu}
\author{Pablo Soberón}
\address{Baruch College \& The Graduate Center, CUNY.  New York, NY.}
\email{psoberon@gc.cuny.edu}
\thanks{This research was supported by NSF grant DMS 2051026.  Sober\'on's research is also supported by NSF grant DMS 2054419.}
\date{\today}
\begin{document}
\maketitle

\begin{abstract}
Krasnosselsky's art gallery theorem gives a combinatorial characterization of star-shaped sets in Euclidean spaces, similar to Helly's characterization of finite families of convex sets with non-empty intersection.  We study colorful and quantitative variations of Krasnosselsky's result.  In particular, we are interested in conditions on a set $K$ that guarantee there exists a measurably large set $K'$ such that every point in $K'$ can see every point in $K$.  We prove results guaranteeing the existence of $K'$ with large volume or large diameter.
\end{abstract}

\section{Introduction}

The study of intersection patterns of finite families of convex sets in Euclidean spaces is a central part of combinatorial geometry.  Helly gave a characterization for those families with non-empty intersection \cites{Radon:1921vh, Helly:1923wr} by proving that \textit{a finite family of convex set in $\rr^d$ has non-empty intersection if every subfamily of at most $d+1$ sets has non-empty intersection}.  There is now a multitude of variations and extensions of Helly's result \cites{Holmsen:2017uf, Amenta:2017ed}.

In 1946, Krasnosselsky proved a similar characterization of star-shapedness in $\rr^d$ \cite{Krasnosselsky1946}.  Given a set $K \subset \rr^d$, we say that two points $x,y$ in $K$ \textit{see} each other if the segment $[x,y]$ is contained in $K$.  We say that $K \subset \rr^d$ is \textit{star-shaped} if there exists $x$ in $K$ that sees all points in $K$.  Krasnosselsky proved the following.

\begin{theorem}[Krasnosselsky 1946]
	Let $K$ be a compact subset of $\rr^d$.  If for every $d+1$ or fewer points in $K$ there exists a point in $K$ that can see all of them, then there exists a point in $K$ that sees every point in $K$.
\end{theorem}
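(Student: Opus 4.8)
The plan is to recast the conclusion as the non-emptiness of the \emph{kernel}
\[
\ker K = \{x\in K : x \text{ sees every point of } K\}.
\]
For $y\in K$ let $V(y)=\{x\in K : [x,y]\subseteq K\}$ be the set of points that see $y$. Seeing is symmetric, so $x\in V(y)$ exactly when $y\in V(x)$, and therefore $\ker K=\bigcap_{y\in K}V(y)$. With this notation the hypothesis says precisely that every $d+1$ of the sets $V(y)$ have a common point, and the goal is to upgrade this to a common point of the whole family. Morally this is Helly's theorem; the difficulty --- and the step I expect to be the real obstacle --- is that the visibility stars $V(y)$ are in general \emph{not} convex, so Helly cannot be applied to them directly. (One also notes at the outset that the hypothesis, applied to pairs of points, makes $K$ path-connected, and that a standard argument shows $\ker K$ is convex and closed.)

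The heart of the proof will be a geometric lemma that replaces the stars by honest convex constraints without losing the kernel. The mechanism is \emph{local nonconvexity}: at a point $q\in K$ every neighborhood of which contains $a,b\in K$ with $[a,b]\not\subseteq K$, the points of $K$ that are ``hard to see from across $q$'' have visibility stars trapped on one side of a hyperplane through $q$. I would package this as follows: produce a family $\mathcal H$ of closed half-spaces, each $H\in\mathcal H$ carrying a \emph{witness} $y_H\in K$ with $V(y_H)\subseteq H$, such that
\[
\ker K=\conv(K)\cap\bigcap_{H\in\mathcal H}H .
\]
Intersecting with $\conv(K)$ is what keeps the eventual common point inside $K$; in the model case of an L-shaped region the single reflex vertex produces two such half-spaces whose intersection with $\conv(K)$ is exactly the kernel, which is reassuring. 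Establishing this identity --- in particular that these half-space constraints already suffice and that their intersection with $\conv(K)$ lands inside $K$ rather than merely inside $\conv(K)$ --- is where essentially all the work lies.

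Granting the lemma, the proof finishes by Helly. Consider the family of compact convex sets $\{H\cap\conv(K) : H\in\mathcal H\}$, all contained in the compact set $\conv(K)$; their total intersection is exactly $\ker K$. By the version of Helly's theorem for (possibly infinite) families of compact convex sets, it suffices to check that every $d+1$ of them meet. Given $H_1,\dots,H_{d+1}\in\mathcal H$ with witnesses $y_{H_1},\dots,y_{H_{d+1}}\in K$, the visibility hypothesis applied to these $d+1$ points yields a point $p\in K$ seeing all of them, so $p\in\bigcap_i V(y_{H_i})\subseteq\bigcap_i H_i$, and since $p\in K\subseteq\conv(K)$ we get $p\in\bigcap_i\big(H_i\cap\conv(K)\big)$. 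Thus the Helly condition holds and $\ker K\neq\emptyset$, as desired. The crux, to reiterate, is the pairing of each convex constraint with a witness point whose entire star lies in it --- this is exactly what converts the ``$d+1$ points'' hypothesis into the ``$d+1$ convex sets'' hypothesis Helly needs --- together with the proof that the half-spaces arising from points of local nonconvexity carve out precisely the kernel.
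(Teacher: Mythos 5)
Your reduction to Helly is sound in outline, and you have correctly located the obstacle: the visibility stars $V(y)$ are not convex, so one must manufacture convex surrogates that still pin down the kernel. The Helly step itself is also handled correctly (compact convex sets, witnesses converting the ``common viewer of $d+1$ points'' hypothesis into the ``$d+1$ convex sets meet'' hypothesis). But the proof has a genuine gap: the central lemma --- that there is a family $\mathcal H$ of closed half-spaces, each with a witness $y_H$ satisfying $V(y_H)\subseteq H$, such that $\conv(K)\cap\bigcap_{H\in\mathcal H}H\subseteq\ker K$ --- is asserted and never proved, and you yourself flag it as ``where essentially all the work lies.'' This is not a deferred routine verification; it is the entire content of the theorem. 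Worse, as an unconditional structural statement about compact sets the identity is false: for $K$ a closed disk with a small open concentric disk removed, $\ker K=\emptyset$, yet no visibility star is contained in any half-plane (each $V(y)$ is the whole set minus a shadow cone behind the hole, and it contains far-away points in directions that positively span the plane), so $\mathcal H=\emptyset$ and the right-hand side is all of $\conv(K)$. Any correct version of your lemma must therefore invoke the visibility hypothesis itself, and you give no indication of how the ``local nonconvexity'' mechanism would do that.

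The classical argument, which this paper adapts when proving its colorful Theorem 2.2, sidesteps the need for any global half-space description of the kernel. One applies Helly (for infinite families of compact convex sets) directly to $\{\conv(V_x)\}_{x\in K}$: every $d+1$ of the $V_x$ meet by hypothesis, hence so do their convex hulls, so there is a point $y\in\bigcap_{x\in K}\conv(V_x)$. The key lemma is then $\bigcap_{x}\conv(V_x)\subseteq\bigcap_{x}V_x$, proved by contradiction for that single point $y$: if $y\notin V_x$, pick $u\in[y,x)\setminus K$, a first return point $x'\in K\cap[u,x]$, a suitable $w\in(u,x')$, and a nearest point $x_0\in K$ to the segment $[u,w]$, attained at $v$; then $V_{x_0}$, hence $\conv(V_{x_0})$, hence $y$, lies in the half-space through $x_0$ facing away from $v$, and an angle computation shows some point of $[u,v)$ is closer to $x_0$ than $v$ is, a contradiction. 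This uses exactly the ``supporting half-space at a point of nonconvexity with a witness'' mechanism you gesture at, but only locally and only for the one point Helly hands you. If you want to salvage your global formulation, you would need to prove your kernel identity under the theorem's hypothesis, which is at least as hard as this local argument; I recommend adopting the local route instead.
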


This result is sometimes called the art gallery problem, as we can interpret $K$ as the blueprint of a gallery, and the theorem gives conditions guaranteeing that a single guard can watch over the entire gallery.   The art gallery problem and its variations are relevant in computational geometry \cite{Bertschinger2022, ORourke1987}.

The proof of Krasnosselsky's theorem relies on a clever application of Helly's theorem.  The purpose of this manuscript is to show that several recent variations of Helly's theorem have a corresponding art gallery version.  This is known for some versions of Helly's theorem, such as fractional \cite{Kalai1997} and $(p,q)$ versions \cite{Barany:2006jh}.  Other known variations of Krasnosselsky's theorem include characterizations of families of convex sets with star-shaped union \cite{Breen:1990ez} or bounds on the dimension of the set of points that see the entire set (see Breen's survey and the references therein \cite{Breen1985}).

We focus on colorful and quantitative versions of Krasnosselsky's theorem.  The quantitative versions of Helly's theorem aim to characterize finite families of convex sets whose intersection is measurably large, rather than just non-empty.  The study of these families started with the volumetric versions of Helly by B\'ar\'any, Katchalski, and Pach \cite{Barany:1982ga}.  They proved the following.

\begin{theorem}[B\'ar\'any, Katchalski, Pach 1982]
	Given a finite family of convex sets in $\rr^d$, if the intersection of every $2d$ or fewer is non-empty and has volume at least one, then the intersection of the whole family has volume at least $d^{-2d^2}$.
\end{theorem}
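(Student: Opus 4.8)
The plan is to follow the strategy that turns Helly's theorem into quantitative information: first produce the intersection as an honest convex body, and then trap it between a small sub-intersection and a controlled circumscribed polytope by way of John's ellipsoid. Write $\mathcal{C}=\{C_1,\dots,C_n\}$ for the family and $K=\bigcap_i C_i$. Since $2d\ge d+1$, the hypothesis together with the classical Helly theorem guarantees $K\neq\emptyset$, and $K$ is a compact convex set. The whole theorem then reduces to the following \emph{selection lemma}: there exist at most $2d$ members of $\mathcal{C}$ whose intersection $K_0$ satisfies $\vol(K_0)\le d^{2d^2}\vol(K)$. Indeed, the hypothesis gives $\vol(K_0)\ge 1$, so the lemma immediately yields $\vol(K)\ge d^{-2d^2}$. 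Because ratios of volumes are invariant under invertible affine maps, and the selection lemma is itself affine invariant, I may replace $K$ by any affine image without loss of generality.

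First I would normalize via John's theorem: after an affine transformation, the maximal-volume ellipsoid inscribed in $K$ is the unit ball $B$, and John's theorem gives $B\subseteq K\subseteq dB$. In particular $\vol(K)\ge\vol(B)$, so it suffices to find at most $2d$ sets whose intersection has volume at most $d^{2d^2}\vol(B)$. The next step is to record, for each contact point $u$ of $B$ with $\partial K$, a single member of $\mathcal{C}$ that certifies the corresponding supporting half-space. Concretely, at a contact point $u\in\partial B\cap\partial K$ the only supporting hyperplane of $B$ is $\{x:\langle x,u\rangle=1\}$; since $B\subseteq K$, any supporting hyperplane of $K$ at $u$ must also support $B$ there, so it equals this one and $K\subseteq\{x:\langle x,u\rangle\le 1\}$. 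As $u\in\partial K$ and $K=\bigcap_i C_i$, some $C_i$ has $u$ on its boundary, and the same argument applied to $B\subseteq C_i$ shows $C_i\subseteq\{x:\langle x,u\rangle\le 1\}$. Thus every contact direction comes equipped with a genuine member of the family whose half-space already contains $K$.

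The count $2d$ enters through positive spanning. John's theorem also provides contact points $u_1,\dots,u_m$ and weights $c_k>0$ with $\sum_k c_k u_k u_k^{\top}=I$, which forces the normals $u_k$ to positively span $\rr^d$. Any positively spanning set contains a minimal positively spanning subset of cardinality at most $2d$, so I can select contact directions $u_{k_1},\dots,u_{k_s}$ with $s\le 2d$ whose positive hull is still all of $\rr^d$. Taking the associated certified members of $\mathcal{C}$, their intersection is contained in the polytope $P=\bigcap_{t=1}^{s}\{x:\langle x,u_{k_t}\rangle\le 1\}$, which contains $K$ and, by positive spanning, is bounded. This produces a sub-intersection defined by at most $2d$ sets and sandwiched as $B\subseteq K\subseteq P$.

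The main obstacle is the final volume estimate $\vol(P)\le d^{2d^2}\vol(B)$. Positive spanning alone does not bound $\vol(P)$: tangent half-spaces to $B$ whose normals are nearly coplanar can define an arbitrarily large (though bounded) polytope, so the minimal positively spanning subset must be chosen with care. I expect the right choice to be the $2d$ directions that keep the normals well conditioned—for instance extracting the subset by a greedy determinant or maximal-simplex rule, so that the selected $u_{k_t}$ inherit a quantitative version of the John spread—and then bounding the circumradius of $P$ in terms of this conditioning. It is precisely the loss incurred by forcing the circumscribed polytope to have only $2d$ facets, rather than the $d(d+3)/2$ facets that all contact points would supply, that produces the large constant $d^{2d^2}$. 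Carrying out this estimate, and verifying that the greedy subset is simultaneously positively spanning and volume-controlled, is where the real work lies; everything else is bookkeeping with John's theorem and Helly's theorem.
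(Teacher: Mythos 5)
First, a point of context: the paper does not prove this statement --- it is quoted as background from B\'ar\'any, Katchalski, and Pach \cite{Barany:1982ga} --- so there is no internal proof to compare against, and I am judging your argument on its own terms. Your reduction to a selection lemma is the right move, and your use of the John position is sound as far as it goes: the observation that at a contact point $u$ of the inscribed unit ball with $\partial K$ some member $C_i$ is forced into the half-space $\{x : \langle x,u\rangle \le 1\}$ is correct, and it is exactly what makes contact points (rather than arbitrary boundary points of $K$) usable here. The extraction of a positively spanning subfamily of at most $2d$ contact normals, which guarantees that the resulting polytope $P$ is bounded, is also correct.

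The gap is the one you name yourself: you never prove $\vol(P)\le d^{2d^2}\vol(B)$, and that inequality is the entire quantitative content of the theorem. As you observe, positive spanning alone gives no volume bound --- a minimal positively spanning subset of the John contact normals can be arbitrarily badly conditioned, and the corresponding polytope, while bounded, can have volume exceeding any prescribed function of $d$. So the argument as written establishes only that some $2d$ members of the family have bounded intersection, not that $\vol(K)\ge d^{-2d^2}$. The missing step --- selecting at most $2d$ contact points that are simultaneously positively spanning and quantitatively well spread, and converting that spread into a circumradius or volume bound for $P$ --- is a genuine theorem rather than bookkeeping; it is essentially the engine of Nasz\'odi's proof \cite{Naszodi:2016he}, which postdates the 1982 result and yields a much better constant, and the original 1982 argument avoids the issue by not passing through the John ellipsoid at all. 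A complete proof must either supply that selection argument with explicit estimates or take a different route. Two smaller issues: John's theorem requires $K$ to have nonempty interior, so you must first rule out $\vol(K)=0$ (for instance by replacing each $C_i$ with $C_i+\varepsilon B$ and letting $\varepsilon\to 0$), and compactness of $K$ is not automatic from the hypotheses; both are handled by routine approximation, but they should be said.
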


Recently, quantitative Helly results gained notoriety when Nasz\'odi confirmed a conjecture by B\'ar\'any, Katchaslki, and Pach on the volume guarantees of the theorem above \cite{Naszodi:2016he}.  The use of similar analytic techniques has yielded similar results for other volumetric and diameter Helly-type theorems (see, e.g., \cites{Brazitikos:2016ja, FernandezVidal2022}).  Another successful approach to these problems involves higher-dimensional parametrizations of certain families of convex sets \cite{Sarkar2021, Dillon2021}.  We show how several of these results have guard gallery versions, in which we can guarantee that a large set of points can see an entire gallery—giving our single guard some room to move around.  For example, one of our results is the following volumetric version.

\begin{theorem}\label{thm:volume-version}
	Let $K \subset \rr^d$ be a compact set.  Suppose that for every $d(d+3)/2$ points in $K$ there exists a set $X\subset K$ of volume 1 such that every point of $X$ sees each of the $d(d+3)/2$ points.  Then, there exists a set $X' \subset K$ of volume $d^{-d}$ that sees every point of $K$.
\end{theorem}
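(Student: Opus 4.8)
The plan is to imitate the classical proof of Krasnosselsky's theorem, which recasts the "seeing" condition as an intersection condition on convex sets and then applies Helly. Here the natural replacement is the quantitative (volumetric) Helly theorem of B\'ar\'any, Katchalski, and Pach, whose colorful/quantitative parameter $2d$ accounts for the $d(d+3)/2$ hypothesis size only after we translate the problem into the right family of convex bodies. Concretely, for each point $p \in K$, let $S_p = \{x \in \rr^d : [x,p] \subset K\}$ be the set of points that can see $p$; then a point $x$ sees all of $K$ precisely when $x \in \bigcap_{p \in K} S_p$, so the goal is to show this intersection contains a set of volume $d^{-d}$. The hypothesis says every $d(d+3)/2$ of the sets $S_p$ contain a common subset of volume $1$.

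\medskip

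First I would record the key geometric fact used in Krasnosselsky's original argument: although each $S_p$ need not be convex, the set of points that can see a \emph{fixed} point $p$ is related to the kernel of $K$, and for compact $K$ the kernel equals $\bigcap_{p} S_p$ where, crucially, each $S_p$ can be replaced by its convex hull or by a suitable convex set without changing small intersections. The standard device is to intersect with the local convexity structure: one shows that the relevant objects $C_p := \bigcap\{H : H \text{ a halfplane through } \partial K \text{ containing a neighborhood of } p\}$ are convex and that the seeing condition localizes to these. I would set up a family $\mathcal{F} = \{C_p : p \in K\}$ of convex sets whose intersection is exactly the kernel of $K$, and verify that the hypothesis on $d(d+3)/2$ points transfers to the statement that every subfamily of size $2d$ of $\mathcal{F}$ has intersection of volume at least some normalized constant.

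\medskip

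Second, the counting: the number $d(d+3)/2$ is exactly $\binom{d+2}{2}-1$, which is the dimension of the space of affine functions on $\rr^d$ plus the quadratic terms needed—more precisely it is the number that arises when one must witness a volume-$1$ body rather than a single point, because a full-dimensional convex body in $\rr^d$ is pinned down by roughly $\binom{d+2}{2}$ supporting conditions (this is the same parameter count behind ellipsoid/John-type arguments and the $2d$ in the quantitative Helly theorem after a change of variables). I expect the technical heart to be showing that the colorful-style selection of $d(d+3)/2$ points suffices to invoke the B\'ar\'any–Katchalski–Pach theorem with its parameter $2d$: one applies their theorem to the family $\mathcal{F}$, using that every $2d$ members have common volume at least $1$ (which follows from the $d(d+3)/2$-point hypothesis together with the fact that a volume-$1$ body is controlled by $d(d+3)/2$ of its supporting points via an affine/John position argument), and concludes $\vol(\bigcap \mathcal{F}) \ge d^{-2d^2}$. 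To reach the claimed bound $d^{-d}$ rather than $d^{-2d^2}$, I would instead route through a sharper quantitative Helly input (for instance the volumetric version obtained via the John ellipsoid, where a volume-$1$ common body forces each $C_p$ to contain a fixed ellipsoid, and Helly applied to the ellipsoids' centers yields the cleaner $d^{-d}$ loss).

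\medskip

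The main obstacle, and the step I would budget the most care for, is the \emph{convexity reduction}: the sets $S_p$ of points seeing a given $p$ are genuinely nonconvex for general compact $K$, so one cannot apply any Helly-type theorem to them directly. Krasnosselsky's resolution is to pass to the convex local cones/supporting halfspaces and prove that their intersection coincides with the true kernel; in the quantitative setting I must additionally ensure this passage preserves the volume guarantee—i.e., that a volume-$1$ witness for $d(d+3)/2$ of the \emph{nonconvex} seeing-sets yields a volume-$1$ (or comparably large) witness for the corresponding \emph{convex} sets. Verifying that the volume is not destroyed by convexification, and that the parameter count $d(d+3)/2$ is exactly what is needed to certify a full-dimensional witness body (as opposed to merely a point), is where the argument will stand or fall.
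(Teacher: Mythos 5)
Your high-level framing (recast ``seeing'' as an intersection condition on convex sets, then feed it to a quantitative Helly input) is the right instinct, but both of the steps you flag as delicate are resolved differently in the paper, and your proposed versions of them do not close. On the convexification: the paper does not pass to local supporting halfspaces $C_p$; it uses Krasnosselsky's original lemma that $\bigcap_{x\in K}\conv(V_x)\subset\bigcap_{x\in K}V_x$ for compact $K$, proved by a nearest-point argument (reproduced in the proof of the colorful theorem in Section 2). With $\conv(V_x)$ the volume transfer you worry about is immediate in the only direction needed: a volume-$1$ set $X\subset V_{x_1}\cap\dots\cap V_{x_m}$ gives $\conv(X)\subset\conv(V_{x_i})$ of volume at least $1$. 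Your sets $C_p$ are a different object, and you would still owe proofs that $\bigcap_p C_p$ lies in the kernel and that the witness survives the passage; neither is supplied.

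More seriously, your route to the bound $d^{-d}$ is not an argument. B\'ar\'any--Katchalski--Pach with Helly number $2d$ only yields $d^{-2d^2}$ (or Nasz\'odi's improvement), and the fix you sketch --- ``Helly applied to the ellipsoids' centers'' --- fails: knowing that each $\conv(V_x)$ contains \emph{some} ellipsoid of volume $d^{-d}$, and that suitable centers admit a common point, says nothing about one ellipsoid contained in all of them simultaneously. The reason the hypothesis uses $d(d+3)/2$ points (you correctly read this as the dimension of the space of pairs of a center and a symmetric matrix) is that the proof runs Helly \emph{in that parameter space}: ellipsoids $a+AB_d'$ with $\det A\ge 1/d$ are parametrized by sets $C'(x)\subset\rr^d\times P_d$, which are convex by log-concavity of the determinant; John's theorem converts the volume-$1$ hypothesis into the $d(d+3)/2$-wise intersection property of these parameter sets; and a topological Helly theorem in dimension $d(d+3)/2-1$ (topological because one must project onto the determinant slice) produces a single ellipsoid of volume at least $d^{-d}$ lying in every $\conv(V_x)$, hence in $\bigcap_x V_x$. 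Without setting up and exploiting this parametrization, neither the count $d(d+3)/2$ nor the bound $d^{-d}$ is actually obtained.
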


In particular, we are interested in how the proof methods for quantitative Helly results involving higher-dimensional parametrizations relate to Krasnosselsky's theorem.  In \cref{sec:quantitative}, we describe general conditions on a parametrization of convex sets that imply quantitative Krasnosselsky-type results.  The conditions that lead to direct proofs of quantitative Krassnolesky-type results are the same as those needed to prove quantitative Tverberg-type theorems \cite{Sarkar2021}.  Understanding the nuances of the parametrization technique may have consequences beyond art gallery problems, which further motivates this approach.  We present results for volume and diameter.

One interesting aspect of Krasnosselsky-type results is that the usual examples showing optimality of Helly-type theorems do not carry directly to art galleries.  Therefore, one might wonder if the loss of volume is necessary in \cref{thm:volume-version}.  We present examples showing that this is needed, regardless of the number of points we might check.

\begin{theorem}\label{thm:non-exactness}
	Let $d,n$ be positive integers.  There exist $\varepsilon>0$ and a compact set $K \subset \rr^d$ such that for any $n$ points in $K$, there is a set $X \subset K$ of volume $1$ where every point in $X$ sees the $n$ points given, but there does not exist a set $X'$ of volume larger than $1-\varepsilon$ such that each point in $X'$ sees all of $K$.
\end{theorem}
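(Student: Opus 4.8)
The plan is to exploit the observation that any admissible $X'$ must be contained in the \emph{kernel} of $K$, namely the set $\ker K$ of points that see every point of $K$: if every point of $X'$ sees all of $K$, then $X' \subseteq \ker K$. Hence it suffices to construct, for each $d$ and $n$, a compact star-shaped set $K$ whose kernel has volume $1-\varepsilon$ while the set of points seeing any prescribed $n$-tuple has volume at least $1$. The nonexistence of a large $X'$ is then immediate, since $\vol(X') \le \vol(\ker K) = 1-\varepsilon$.

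The construction I would use starts from a convex body $B$ (the main gallery) and attaches $N = n+1$ widely separated thin ``pockets'' $F_1,\dots,F_N$ near $\partial B$. Each pocket $F_i$ is engineered, by inserting a thin obstructing wall with a small gap, so that the set of points of $K$ that can see the deepest point $p_i$ of $F_i$ is, up to an arbitrarily small error, a half-space cut $B \cap H_i$, the complementary cap $B \setminus H_i$ being screened off from $p_i$. I would place the bounding hyperplanes of the $H_i$ symmetrically about a central point, so that the full intersection $B \cap \bigcap_{i=1}^{N} H_i$ is a small convex region while omitting any single constraint opens up a large region.

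The two required estimates then reduce to elementary volume comparisons for half-spaces. Writing $a = \vol\bigl(B\cap\bigcap_{i=1}^{N} H_i\bigr)$ and $b = \vol\bigl(B \cap \bigcap_{i\neq j} H_i\bigr)$ (the same for every $j$ by symmetry), each pocket strictly cuts volume, so $a<b$; after rescaling $K$ by $\lambda$ with $\lambda^{d} = 1/b$ the ``omit-one'' volume becomes $1$ and the full intersection becomes $a/b<1$, and I set $\varepsilon = 1-\vol(\ker K)$. For the lower bound, because the pockets are far apart, any single point $q\in K$ forces at most one constraint $H_{i(q)}$ (or none, if $q$ lies in the open body), so any $n$ chosen points activate at most $n<N$ of the constraints; the set seeing all of them therefore contains $B \cap \bigcap_{i\in S} H_i$ for some $S$ with $|S|\le n$, of volume at least $b\lambda^{d} = 1$. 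For the upper bound, a point lies in $\ker K$ only if it can see into every pocket, which confines it essentially to $B\cap\bigcap_{i} H_i$.

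The main obstacle is the gadget realizing a genuine half-space of visibility: a point source casts a half-space shadow only behind an unbounded wall, which would disconnect $K$, so I must use a wall with a small gap and accept that the true visibility region of $p_i$ is $B\cap H_i$ together with a thin cone through the gap. I would control this by taking the gaps small and the pockets far apart, so that these cone corrections have negligible volume and, using the strict inequality $a/b<1$ together with a slightly reduced $\varepsilon$, affect neither estimate; one must also verify that the central region of $B\cap\bigcap_{i} H_i$ genuinely sees all of $K$, so that the kernel is nonempty and the example is non-degenerate.
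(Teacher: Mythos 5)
Your overall strategy---bound $X'$ by the kernel, make the kernel measurably smaller than the set that sees any $n$ prescribed points, then rescale---is the same as the paper's, but the gadget you use to realize the visibility constraints does not work, and the paper explicitly flags the obstruction: one cannot make visibility regions resemble half-spaces. The difficulty is that your obstructing wall constrains far more than the designated point $p_i$. If the wall is a portion of a hyperplane $G_i$ spanning $B$ (minus a small gap), so that $V_{p_i}$ is essentially $B\cap H_i$, then by the symmetry of visibility any point $q$ lying in the shadowed cap $B\setminus H_i$ away from the gap has $V_q$ essentially contained in $B\setminus H_i$ plus the thin cone through the gap. Since you need $B\cap\bigcap_i H_i$ to be a small central region, the caps $B\setminus H_i$ are large, so such $q$ exist in abundance. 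Now take the two points $p_i$ and $q$: the set of points seeing both is essentially $(B\cap H_i)\cap(B\setminus H_i)$ together with the gap cone, which has negligible volume. Hence for every $n\ge 2$ the hypothesis of \cref{thm:non-exactness} fails for your $K$---the theorem quantifies over \emph{all} $n$-tuples of points of $K$, not only over the designated deep points $p_1,\dots,p_N$---and your claim that ``any single point $q\in K$ forces at most one constraint $H_{i(q)}$ or none'' is exactly where this is hidden: a point in a shadowed cap forces the complementary constraint $B\setminus H_i$, which is not among the $H_j$.

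The paper's construction avoids this by attaching outward spikes to a large ball $M'B_d$ rather than inserting internal walls: the constraint imposed by a spike tip $Mv$ is the convex cone $C_v$ with apex $Mv$ circumscribed about a unit-volume ball $B'_d$, and every point of the gallery (including every point of a spike) has visibility region containing either all of $M'B_d$ or $C_v\cap M'B_d$ for the single spike containing it. The quantitative gain does not come from an omit-one count with $n+1$ constraints, but from the fact that the intersection of any $n$ such tangent cones with $M'B_d$ strictly contains $B'_d$ and, by compactness of $\left(S^{d-1}\right)^n$, has volume at least a uniform $m>1$, while finitely many spikes pin the kernel down to volume $1+\delta$ with $(1+\delta)/m<1-\varepsilon$. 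To repair your argument you would have to replace the half-space gadget by a cone gadget of this kind and verify the visibility claim for every point of the gallery, at which point you have essentially reconstructed the paper's proof.
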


The colorful versions of Helly's theorem are variations of Lov\'asz's colorful Helly theorem \cite{Barany1982}.  Lov\'asz proved that \textit{given $d+1$ finite families $\ff_1, \ldots, \ff_{d+1}$ of convex sets in $\rr^d$, if the intersection of every $(d+1)$-tuple $K_1 \in \ff_1, \ldots, K_{d+1} \in \ff_{d+1}$ is not empty, there exists an index $i \in \{1, \dots, d+1\}$ such that the intersection of $\ff_i$ is not empty.}  This result is called ``colorful'' because we can think of each $\ff_i$ as a color class.  There are a few ways to interpret what a colorful Krasnosselsky theorem would mean.  The following is an example of one interpretation we prove in the plane.

\begin{theorem}\label{thm-colorful-plane-krassno}
	Let $K$ be a compact, simply connected set in the plane and $P_1, P_2, P_3$ be finite subsets of $K$.  If for every choice $p_1 \in P_1, p_2 \in P_2, p_3 \in P_3$, we know there exists a point $x \in K$ that can see $p_1, p_2$, and $p_3$, there exist an index $i \in \{1,2,3\}$ and a point $x \in K$ that can see all of $P_i$.
\end{theorem}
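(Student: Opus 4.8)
The plan is to recast the statement as an instance of a \emph{colorful} Helly theorem for visibility regions, and then to supply the topological input that replaces convexity. For each point $p\in K$ set $V(p)=\{x\in K : [x,p]\subseteq K\}$, the set of points that see $p$. Since $K$ is compact each $V(p)$ is closed, hence compact; and if $x\in V(p)$ then every point of $[x,p]$ also sees $p$, so $V(p)$ is star-shaped with center $p$ and in particular contractible. With this notation the hypothesis says exactly that for every rainbow triple $p_1\in P_1,\ p_2\in P_2,\ p_3\in P_3$ the intersection $V(p_1)\cap V(p_2)\cap V(p_3)$ is nonempty, and the desired conclusion says that for some $i$ the whole monochromatic family $\{V(p):p\in P_i\}$ has a common point. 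This is precisely the shape of Lov\'asz's colorful Helly theorem with $d+1=3$ color classes in the plane, except that the sets $V(p)$ are not convex.

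To get around the lack of convexity I would invoke the \emph{topological} colorful Helly theorem of Kalai and Meshulam: if the union of $d+1$ families of sets in $\rr^d$ forms a good cover (every nonempty intersection of members is contractible) and every rainbow $(d+1)$-tuple has a common point, then some color class has a common point. Taking $d=2$ and the three families $\mathcal F_i=\{V(p):p\in P_i\}$, the rainbow hypothesis is exactly what is given and the conclusion is exactly what is wanted, so the whole proof reduces to verifying that $\{V(p):p\in P_1\cup P_2\cup P_3\}$ is a good cover, i.e. that every nonempty intersection $\bigcap_{p\in A}V(p)$, for finite $A\subseteq K$, is contractible.

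Establishing this good-cover condition is the heart of the argument and the place where compact \emph{simple connectivity} of $K$ in the \emph{plane} is indispensable. The prototype is the claim that $V(p)\cap V(q)$ is contractible whenever it is nonempty: given $x,y$ in it, the four segments $[x,p],[p,y],[y,q],[q,x]$ all lie in $K$, so the closed quadrilateral through $x,p,y,q$ is a loop in $K$; simple connectivity fills it by a disk inside $K$, and one checks that points of this filled region still see both $p$ and $q$, producing a path from $x$ to $y$ inside $V(p)\cap V(q)$ and, pushed further, a contraction. The same filling idea, applied to the sightline polygons spanned by a configuration in $\bigcap_{p\in A}V(p)$, should yield contractibility of the general intersection. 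I expect this step to be the main obstacle: one must control the geometry of the filled visibility polygons and rule out the disconnections that genuinely occur once $K$ is allowed a hole (a region with two separate windows onto $p$ and $q$ creates two components of $V(p)\cap V(q)$, but such a region is not simply connected), which is precisely the phenomenon that simple connectivity forbids. Once the good cover is in hand, the topological colorful Helly theorem finishes the proof at once.
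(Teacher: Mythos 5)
Your overall strategy is exactly the one the paper uses: reduce the statement to the topological colorful Helly theorem of Kalai and Meshulam applied to the three families $\{V(p): p\in P_i\}$, with the good-cover property of visibility regions supplying the topological hypothesis. The one place where you diverge is that you try to establish the good-cover condition from scratch, whereas the paper simply cites the lemma of Kalai and Matou\v{s}ek \cite{Kalai1997}: for a compact, simply connected planar set, every finite intersection of visibility regions is empty or contractible. That lemma is precisely the statement you identify as ``the heart of the argument,'' so you could close your proof by citing it. As written, your sketch of it has a genuine gap: for the quadrilateral with cyclic boundary $[x,p]\cup[p,y]\cup[y,q]\cup[q,x]$, the claim that every point of the filled region sees both $p$ and $q$ is not automatic --- it holds when the diagonal $[p,q]$ is the interior diagonal (the region is then the union of the two convex triangles $xpq$ and $pyq$, each containing $p$ and $q$ as vertices), but if instead $[x,y]$ is the interior diagonal, a point of triangle $xpy$ need not see $q$; moreover the boundary curve need not be simple, and passing from connectivity of pairwise intersections to contractibility of arbitrary finite intersections requires a further argument (Kalai and Matou\v{s}ek combine simple connectivity of each $V_x$ with connectivity of pairwise intersections and a nerve-type argument). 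So either cite the lemma, or be prepared to reproduce its proof in full; the reduction itself is correct and matches the paper.
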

While the proof method we use for this result cannot be extended to higher dimensions, we also prove a slightly different colorful version of Krasnosselsky's theorem that holds in any dimension.

In \cref{sec:colorful}, we present and prove our colorful Krasnosselsky results.  Then, in \cref{sec:quantitative}, we show the quantitative versions of Krasnosselsky's theorem as well as the examples proving non-exactness of certain Krasnosselsky-type results.  Finally, we include remarks and open problems in \cref{sec:remarks}.


\section{Colorful Krasnosselsky Results}\label{sec:colorful}

In this section, we show two different colorful versions of Krasnosselsky's theorem.  The first, \cref{thm-colorful-plane-krassno}, works in $\rr^2$ and only requires information about finite subsets of a simply connected region $K$.  The second, \cref{colorful krasnoselskii non-discrete}, works in $\rr^d$, but it involves stronger conditions.

We start with a short proof of \cref{thm-colorful-plane-krassno}.  For this proof, we will use a topological version of colorful Helly's theorem by Kalai and Meshulam \cite{Kalai:2005tb}.  We say that a good cover of a set $K \subset \rr^2$ is a family of sets whose union is $K$ and such that any finite intersection of them is either empty or contractible.  Applied to a simply connected region in $\rr^2$, Kalai and Meshulam's theorem states that \textit{given a good cover $\mathcal{G}$ of $K$ and three finite families $\mathcal{G}_1, \mathcal{G}_2, \mathcal{G}_3 \subset \mathcal{G}$ such that for any $F_1\in \mathcal{G}_1, F_2 \in \mathcal{G}_2, F_3 \in \mathcal{G}_3$ we know that $F_1 \cap F_2 \cap F_3$ has non-empty intersection, then at least one of the families $\mathcal{G}_i$ has non-empty intersection.}

\begin{proof}[Proof of Theorem \ref{thm-colorful-plane-krassno}]

Kalai and Matou\v{s}ek proved that if $K$ is a compact, simply connected region in the plane, and we denote by $V_x$ the visibility region of any $x \in K$, then every finite intersection of sets of the form $V_{x_1}, \ldots, V_{x_n}$ for $x_1, \dots, x_n \in K$ is either empty or contractible \cite{Kalai1997}.  In other words, the family of sets $\mathcal{G}= \{V_x : x \in K\}$ forms a good cover of $K$.
	
It suffices to consider $\mathcal{G}_i = \{V_x : x \in P_i\}$, and apply the Kalai--Meshulam theorem to finish the proof.
\end{proof}

First, the number of color classes in Theorem \ref{thm-colorful-plane-krassno} is optimal, as Figure \ref{fig:optimality} demonstrates.  Second, the topological conditions on $K$ are necessary.  If we remove the simply connected condition on $K$, Theorem \ref{thm-colorful-plane-krassno} fails in any dimension, even if we consider an arbitrary number of color classes. Figure \ref{fig:spider} gives an example in the plane.

\begin{claim}
	Let $n,d \ge 2$ be integers.  There exists a compact set $K\subset \rr^d$ and $n$ finite sets of points $\ff_1, \ldots \ff_n \subset K$ such that
	\begin{itemize}
		\item For any $n$ points $x_1 \in \ff_1, \ldots, x_n \in \ff_n$, there exists a point $p \in K$ that sees each of the $n$ points.
		\item For each $i \in [n]$, there is no point in $K$ that sees all of $\ff_i$.
	\end{itemize}
\end{claim}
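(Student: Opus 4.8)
The plan is to build the example as a round annulus together with finitely many points on its outer circle, and to reduce the whole problem to an elementary statement about angular gaps. Let $K$ be the closed annulus $\{x \in \rr^2 : r \le |x| \le R\}$ (with the hole being the open disk of radius $r$), and put $\alpha = \arccos(r/R) \in (0,\pi/2)$. The crux is the following visibility lemma, which I would establish first: for a finite set $S$ of points on the outer circle $|x| = R$, there is a point of $K$ that sees all of $S$ if and only if $S$ is contained in an arc of the outer circle of angular width at most $4\alpha$. The ``only if'' direction is what makes the hole matter, and I would prove it by showing that from \emph{any} seer $s \in K$ the visible part of the outer circle is a single arc whose width is maximized when $s$ lies on the outer circle, where it equals $4\alpha$; the ``if'' direction is witnessed by the outer-boundary point at the midpoint of the arc containing $S$.

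Granting the lemma, the construction is transparent. For a set $S$ on the outer circle, write its \emph{span} as $2\pi$ minus its largest cyclic gap (the length of the shortest arc containing $S$), so that $S$ has a common seer exactly when its span is at most $4\alpha$. I would choose the radii so that
\[
4\alpha = 2\pi - \tfrac{2\pi}{n}, \qquad \text{i.e. } \tfrac{r}{R} = \cos\alpha = \sin\tfrac{\pi}{2n}.
\]
Then I would take each color class $\ff_i$ to consist of $2n$ equally spaced points on the outer circle (rotated by distinct small offsets $\phi_i$ to keep the classes genuinely different). Each class has all gaps equal to $\pi/n < 2\pi/n = 2\pi - 4\alpha$, hence span $2\pi - \pi/n > 4\alpha$, so by the lemma no point of $K$ sees all of $\ff_i$; this gives the second bullet. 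For the first bullet, any transversal $x_1 \in \ff_1, \dots, x_n \in \ff_n$ is a set of at most $n$ points, so by pigeonhole its largest cyclic gap is at least $2\pi/n = 2\pi - 4\alpha$, its span is at most $4\alpha$, and the lemma produces a common seer. The point is that the gap/span dictionary turns the two requirements into opposite-looking conditions (each class must be ``dense,'' every transversal must be ``spread out''), and the pigeonhole bound $2\pi/n$ on the largest gap of $n$ points makes the transversal condition automatic.

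The main obstacle is the ``only if'' half of the visibility lemma, since for the second bullet I must rule out \emph{all} seers, interior ones included, not merely those on the outer circle. I would handle this by computing, for a seer at radius $\rho$, the critical angle $\theta_{\max}(\rho)$ at which the sight line to an outer point becomes tangent to the hole (equivalently, where the distance from the origin to the segment equals $r$), checking that at $\rho = R$ one gets $\cos(\theta_{\max}/2) = r/R$, i.e.\ width $4\alpha$, and arguing monotonicity by noting that as the seer recedes from the hole its shadow on the outer circle shrinks. Finally, to obtain the statement for $d \ge 2$ I would replace $K$ by $K \times [0,1]^{d-2} \subset \rr^d$ and place all class points at height $0$; since the added coordinates range over a convex factor, a point $(p,u)$ sees $(x,0)$ precisely when $p$ sees $x$ in the annulus, so the problem reduces verbatim to the planar one while $K$ stays compact.
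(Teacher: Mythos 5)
Your construction is correct, but it is genuinely different from the paper's. The paper's example is one-dimensional and purely combinatorial: it fixes arbitrary finite classes $\ff_1,\dots,\ff_n$ in general position with $|\ff_i|\ge 3$, attaches to each transversal $P$ a dedicated guard point $g_P$, and lets $K$ be the union of the segments $[x_i,g_P]$; general position (plus an extra non-concurrency condition when $d=2$) forces every point of this ``spider'' to see at most one point of each class (at most two in the plane), so no point sees a whole class, while $g_P$ sees its own transversal by construction. Your example is instead a full-dimensional, concrete annulus, and the whole claim is reduced to one quantitative visibility lemma plus a pigeonhole on cyclic gaps. I checked the lemma: the tangent line from a seer at $(\rho,0)$ to the inner circle meets the outer circle at angle $\arccos(r/\rho)+\arccos(r/R)$, so the visible portion of the outer circle is a closed arc of width $2\bigl(\arccos(r/\rho)+\arccos(r/R)\bigr)\le 4\alpha$, with equality exactly on the outer boundary; thus the monotonicity you assert is exact, and the borderline case of a transversal with span exactly $4\alpha$ is covered because the closed annulus contains segments tangent to the hole. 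The calibration $4\alpha=2\pi-2\pi/n$ then makes ``every transversal has a seer'' automatic for any $n$ points, and ``no class has a seer'' a statement about $2n$ equally spaced points, and the product $K\times[0,1]^{d-2}$ carries everything to $d\ge 3$ since visibility factors through the convex second coordinate. What each approach buys: the paper's construction is dimension-uniform and works for completely arbitrary classes, at the price of general-position bookkeeping; yours is self-contained, quantitative, and makes transparent that the single hole (the failure of simple connectivity, cf.\ \cref{thm-colorful-plane-krassno}) is the entire obstruction, at the price of a planar computation and the product trick for higher dimensions.
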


\begin{proof}
	We start by taking arbitrary finite families of points $\ff_1, \ff_2, \ldots, \ff_n$ such that $\bigcup_{i=1}^n \ff_i$ is in general position and $|\ff_i| \ge 3$ for each $i$.  Then, for each $n$-tuple $P = (x_1, \ldots, x_n) \in \ff_1 \times \dots \times \ff_n$, we consider a point $g_P$ so that the set
	\[
	\{g_P : P \in \ff_1 \times \dots \times \ff_n\} \cup \ff_1 \cup \ldots \cup \ff_n
	\]
	is in general position.  For $P = (x_1, \dots, x_n)\in \ff_1 \times \dots \times \ff_n$, we include the segment $[x_i, g_P]$ in $K$ for each $i$.  If $d=2$, we also impose the condition that no three segments generated by pairwise disjoint pairs of the set above have a point in common.  When $d \ge 3$, the relative interior of each of these segments is disjoint from all the other segments generated this way.  When $d =2$, we have the additional condition.  Each point of $K$ can see at most one point of each $\ff_i$ if $d \ge 3$ and at most two points of each $\ff_i$ if $d=2$.
\end{proof}





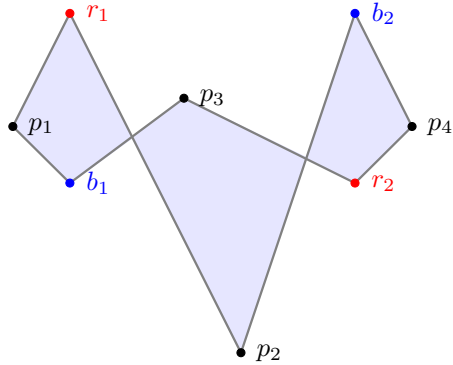
\begin{figure}[h]
\begin{center}
\begin{tikzpicture}[scale=0.75]
\filldraw[blue!10] (6,4) -- (7, 6) -- (8.1,3.8) -- (7, 3) -- cycle;
\filldraw[blue!10] (8.1,3.8) -- (9,4.5) -- (11.15,3.45) -- (10,0) -- cycle;
\filldraw[blue!10] (12,6) -- (13, 4) -- (12, 3) -- (11.15,3.45) -- cycle;

\draw[line width=0.3mm,gray](6,4)--(7,6);
\draw[line width=0.3mm,gray](6,4)--(7,3);
\draw[line width=0.3mm,gray](7,6)--(10,0);
\draw[line width=0.3mm,gray](7,3)--(9,4.5);
\draw[line width=0.3mm,gray](12,6)--(10,0);
\draw[line width=0.3mm,gray](13,4)--(12,6);
\draw[line width=0.3mm,gray](13,4)--(12,3);
\draw[line width=0.3mm,gray](12,3)--(9,4.5);

\filldraw [black] (6,4) circle (2pt);
\filldraw [red] (7,6) circle (2pt);
\filldraw [blue] (7,3) circle (2pt);
\filldraw [black] (9,4.5) circle (2pt);
\filldraw [black] (10,0) circle (2pt);
\filldraw [blue] (12,6) circle (2pt);
\filldraw [red] (12,3) circle (2pt);
\filldraw [black] (13,4) circle (2pt);

\node[color=red!100] at (7.5,6) {$r_1$};
\node[color=red!100] at (12.5,3) {$r_2$};
\node[color=blue!100] at (7.5,3) {$b_1$};
\node[color=blue!100] at (12.5,6) {$b_2$};
\node[color=black!100] at (6.5,4) {$p_1$};
\node[color=black!100] at (10.5,0) {$p_2$};
\node[color=black!100] at (9.5,4.5) {$p_3$};
\node[color=black!100] at (13.5,4) {$p_4$};
\end{tikzpicture}
\end{center}
\caption{A planar, simply-connected gallery in which each colorful pair can be seen but neither color class can be seen from a single point.}
\label{fig:optimality}
\end{figure}

\begin{figure}[h]
\begin{center}
    \begin{tikzpicture}
    \draw[line width=0.2mm,gray](5,3.1)--(4,3.5);
    \draw[line width=0.2mm,gray](5,3.1)--(6,3);
    \draw[line width=0.2mm,gray](5,3.1)--(6,4);

    \draw[line width=0.2mm,gray](7,2)--(6,3);
    \draw[line width=0.2mm,gray](7,2)--(6,4);
    \draw[line width=0.2mm,gray](7,2)--(7,1);

    \draw[line width=0.2mm,gray](1,5)--(4,3.5);
    \draw[line width=0.2mm,gray](1,5)--(6,4);
    \draw[line width=0.2mm,gray](1,5)--(2,0);

    \draw[line width=0.2mm,gray](9,0)--(7,1);
    \draw[line width=0.2mm,gray](9,0)--(6,4);
    \draw[line width=0.2mm,gray](9,0)--(2,0);

    \draw[line width=0.2mm,gray](4.5,2.8)--(4,3.5);
    \draw[line width=0.2mm,gray](4.5,2.8)--(6,3);
    \draw[line width=0.2mm,gray](4.5,2.8)--(3,2);

    \draw[line width=0.2mm,gray](5.5,2)--(6,3);
    \draw[line width=0.2mm,gray](5.5,2)--(3,2);
    \draw[line width=0.2mm,gray](5.5,2)--(7,1);

    \draw[line width=0.2mm,gray](2,3)--(4,3.5);
    \draw[line width=0.2mm,gray](2,3)--(3,2);
    \draw[line width=0.2mm,gray](2,3)--(2,0);

    \draw[line width=0.2mm,gray](5,1)--(7,1);
    \draw[line width=0.2mm,gray](5,1)--(3,2);
    \draw[line width=0.2mm,gray](5,1)--(2,0);
    
    \filldraw [black] (5,3.1) circle (2pt);
    \node[color=black!100] at (5.5,3.1) {$p_1$};
    \filldraw [black] (7,2) circle (2pt);
    \node[color=black!100] at (7.5,2) {$p_2$};
    \filldraw [black] (1,5) circle (2pt);
    \node[color=black!100] at (1.5,5) {$p_3$};
    \filldraw [black] (9,0) circle (2pt);
    \node[color=black!100] at (9.5,0) {$p_4$};
    \filldraw [black] (4.5,2.8) circle (2pt);
    \node[color=black!100] at (5,2.8) {$p_5$};
    \filldraw [black] (5.5,2) circle (2pt);
    \node[color=black!100] at (6,2) {$p_6$};
    \filldraw [black] (2,3) circle (2pt);
    \node[color=black!100] at (2.5,3) {$p_7$};
    \filldraw [black] (5,1) circle (2pt);
    \node[color=black!100] at (5.5,1) {$p_8$};

    \filldraw [red] (6,4) circle (2pt);
    \filldraw [red] (3,2) circle (2pt);
    \node[color=red!100] at (6.5,4) {$r_1$};
    \node[color=red!100] at (3.5,2) {$r_2$};
    \filldraw [green] (4,3.5) circle (2pt);
    \filldraw [green] (7,1) circle (2pt);
    \node[color=green!100] at (4.5,3.5) {$g_1$};
    \node[color=green!100] at (7.5,1) {$g_2$};
    \filldraw [blue] (2,0) circle (2pt);
    \filldraw [blue] (6, 3) circle (2pt);
    \node[color=blue!100] at (6.5,3) {$b_1$};
    \node[color=blue!100] at (2.5,0) {$b_2$};
    \end{tikzpicture}
\end{center}
\caption{A planar, non-simply connected gallery in which each colorful triple can be seen but no color class can be seen.}
\label{fig:spider}
\end{figure}
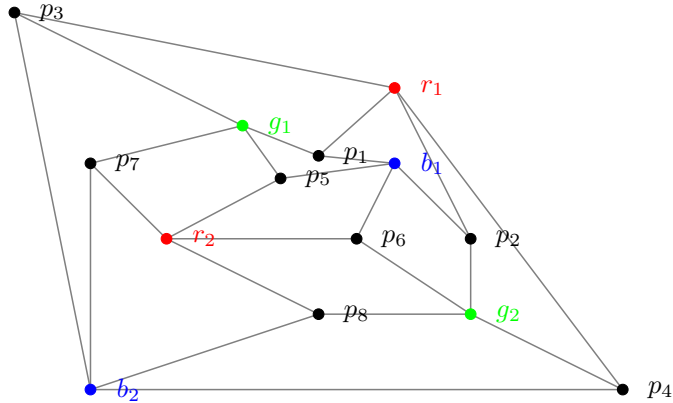

Figure \ref{fig:higher-dim} shows that we cannot apply the same approach of leveraging the topological version of colorful Helly's theorem to prove this colorful variant of Krasnosselsky's theorem in $\rr^d$.  However, it is still possible that this colorful combinatorial property holds in higher dimensional galleries that are simply connected or with other topological properties.

We now show the second colorful version of Krasnosselsky's theorem that holds in any dimension.  While this result does not have the same interpretation in the context of art galleries, it gives an interesting characterization of star-shaped sets. 

\begin{theorem} \label{colorful krasnoselskii non-discrete}
    Let $K \subset \rr^d$ be a compact set and $F_1, F_2, \ldots, F_{d+1}$ be closed subsets of $K$.  If for every choice $f_1 \in F_1, f_2, \in F_2, \ldots, f_{d+1} \in F_{d+1}$, we know there exists a point $x \in K$ such that $[x, f_i] \subset F_i$ for each $i \in [d+1]$, there exists some $i \in [d+1]$ such that $F_i$ is star-shaped.
\end{theorem}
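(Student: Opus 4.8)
The plan is to recast the statement as an instance of Lov\'asz's colorful Helly theorem, using the classical convexification device from Krasnosselsky's original argument to replace visibility regions by half-spaces. For each $i\in[d+1]$ and each $p\in F_i$, write $V_i(p)=\{x\in F_i:[x,p]\subseteq F_i\}$ for the set of points that see $p$ within $F_i$; these are closed subsets of the compact set $K$, and $F_i$ is star-shaped precisely when $\bigcap_{p\in F_i}V_i(p)\neq\emptyset$. The hypothesis says exactly that for every rainbow tuple $(f_1,\dots,f_{d+1})\in F_1\times\cdots\times F_{d+1}$ the intersection $\bigcap_{i}V_i(f_i)$ is nonempty. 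If we could run colorful Helly directly on the families $\{V_i(p):p\in F_i\}$ we would be done; the obstacle is that the $V_i(p)$ need not be convex.

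To convexify, I would restrict attention to the points of local nonconvexity of each $F_i$. If some $F_i$ has no such points it is convex, hence star-shaped, and we are finished; so assume each $F_i$ has a nonempty set $Q_i$ of local nonconvexity points. To each $q\in Q_i$ I would attach a closed half-space $\overline{H^i_q}$, with $q$ on its boundary, cut out by the local indentation of $F_i$ at $q$, chosen so that (i) every point of $F_i$ that sees $q$ lies in $\overline{H^i_q}$, and (ii) a common point of the whole family $\{\overline{H^i_q}:q\in Q_i\}$ certifies star-shapedness, i.e. $\bigcap_{q\in Q_i}\overline{H^i_q}\neq\emptyset$ implies $\ker F_i\neq\emptyset$. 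Property (i) is what makes the colorful condition transfer: applying the hypothesis to a rainbow tuple of local nonconvexity points $(q_1,\dots,q_{d+1})\in Q_1\times\cdots\times Q_{d+1}$ produces a point $x\in K$ seeing each $q_i$ within $F_i$, whence $x\in\bigcap_i\overline{H^i_{q_i}}$.

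With these half-spaces in hand, every rainbow tuple of the convex families $\{\overline{H^i_q}:q\in Q_i\}$, $i\in[d+1]$, has a common point, and there are exactly $d+1$ color classes in $\rr^d$. I would then invoke colorful Helly to obtain an index $i$ with $\bigcap_{q\in Q_i}\overline{H^i_q}\neq\emptyset$; by property (ii) this means $F_i$ is star-shaped, which is the conclusion. Since the $Q_i$ may be infinite, I would first intersect each half-space with a fixed ball containing $K$ so as to work with compact convex sets, and promote the finite colorful Helly theorem to arbitrary families by the usual compactness argument: if no color class had a common point, each would contain a finite subfamily with empty intersection, and finite colorful Helly applied to these subfamilies would yield a rainbow tuple with empty intersection, contradicting the transferred hypothesis.

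The main obstacle is property (ii): establishing, in a form compatible with the colorful bookkeeping, that the half-spaces at the local nonconvexity points recover the kernel. This is the geometric heart of Krasnosselsky's theorem, and the delicate point is that colorful Helly produces a point lying in all of the color-$i$ half-spaces \emph{abstractly}, not as an explicit common viewer; one must verify that such a point in fact lies in $F_i$ and sees all of $F_i$, rather than merely all of $Q_i$. I expect the correct choice of $\overline{H^i_q}$ and the reduction of global visibility within $F_i$ to visibility of its local nonconvexity points to require the most care, whereas the colorful Helly and compactness steps should be routine once the convexification is in place.
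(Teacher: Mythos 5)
Your high-level skeleton --- convexify the visibility regions, feed the $d+1$ color classes to colorful Helly, then ``deconvexify'' --- is the same as the paper's, but the proposal has a genuine gap exactly where the mathematical content lies: you never construct the convexification, and you explicitly defer the key property (ii) as ``the main obstacle.'' Worse, the specific convexification you propose is problematic even before (ii). Property (i) can fail: take $F_i$ to be a disk with a thin open sector removed (``pac-man''). The apex is a point of local nonconvexity, yet every point of the set sees it along a radial segment, so its visibility region is the whole set, which is contained in no closed half-space having the apex on its boundary. Your fallback step ``no points of local nonconvexity $\Rightarrow$ convex'' is the Tietze--Nakajima theorem and requires connectedness; here the $F_i$ are arbitrary closed subsets of $K$ and may be disconnected (a two-point set has no local nonconvexity points but is not star-shaped). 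And even granting suitable half-spaces, the reduction from ``sees every point of $Q_i$'' to ``sees every point of $F_i$'' is itself a nontrivial theorem that you do not prove.

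The paper avoids all of this by convexifying differently: it sets $V^{(i)}_x=\{y:[x,y]\subset F_i\}$ for \emph{every} $x\in F_i$, applies the colorful Helly theorem to the families $\mathcal{G}_i=\{\conv(V^{(i)}_x):x\in F_i\}$ (each rainbow tuple meets because the point $x$ furnished by the hypothesis lies in each $V^{(i)}_{f_i}$), and then proves the containment
\[
\bigcap_{x\in F_i}\conv\bigl(V^{(i)}_x\bigr)\ \subset\ \bigcap_{x\in F_i}V^{(i)}_x .
\]
This containment is Krasnosselsky's nearest-point argument: if $y$ lies in every convex hull but $[y,x]\not\subset F_i$ for some $x$, one locates a short segment $[u,w]$ disjoint from $F_i$ near the first exit point, takes $x_0\in F_i$ nearest to that segment, observes that $V^{(i)}_{x_0}$ (hence its convex hull, hence $y$) lies in the half-space through $x_0$ perpendicular to the nearest-point direction, and derives an angle contradiction with the minimality of the distance. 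The half-spaces thus arise a posteriori at nearest points, where the containment $V^{(i)}_{x_0}\subset Q$ is automatic, rather than being postulated a priori at local nonconvexity points, where it can fail. To repair your argument you would need to either supply a correct version of (i) and (ii) with connectivity hypotheses you do not have, or switch to the paper's convex-hull-of-visibility-region device.
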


The proof method uses a modification of Krasnosselsky's original arguments (see also \cite{Danzer:1963ug}).  The same technique gives us colorful versions of the quantitative theorems in the next section.

\begin{proof}

For each $x \in F_i$, where $i \in [d+1]$, let 
\begin{align*}
    V^{(i)}_x &= \{y \in \rr^d | [x, y] \subset F_i\}.
\end{align*}

Consider $\mathcal{G}_i = \{\conv(V^{(i)}_x) : x \in F_i\}$, where $1 \leq i \leq d+1$.  These are families of compact, convex sets in $\rr^d$.  If we choose one set from each $\mathcal{G}_i$, the resulting $(d+1)$-tuple will have non-empty intersection.  By colorful Helly's theorem for infinite families, we have that $\bigcap_{x \in F_i} \conv(V^{(i)}_x) \neq \emptyset$ for some $i$.  We will show that $\bigcap_{x \in F_i} \conv(V^{(i)}_x) \subset  \bigcap_{x \in F_i} V^{(i)}_x$, which will finish the proof.

Assume there exists some $y \in \bigcap_{x \in F_i} \conv(V^{(i)}_x)$ such that $y \notin \bigcap_{x \in F_i} V^{(i)}_x$.  Then, $y \notin V^{(i)}_x$ for some $x \in F_i$.  It follows that there exists $u \in [y, x)$ such that $u \notin F_i$.  There also exists $x' \in F_i$ such that $x' \in F_i \cap [u, x]$ and $F_i \cap [u, x') = \emptyset$.  Additionally, there exist $w \in (u, x')$ such that $\|w - x'\| = \dfrac{1}{2} \dist(\{u\}, F_i)$ and $v \in [u, w]$ and $x_0 \in F_i$ such that $\|x_0 - v\| = \dist([u, w], F_i)$.

Since $x_0$ is the point of $F_i$ that is closest to $v$, $V^{(i)}_{x_0}$ lies in the closed half-space $Q$ that is bounded by the hyperplane through $x_0$ and perpendicular to $[v, x_0]$.  Recall that since $y \in \bigcap_{x \in F_i} \conv(V^{(i)}_x)$, $y \in \conv(V^{(i)}_{x_0})$.  Since $V^{(i)}_{x_0} \subset Q$, $\conv(V^{(i)}_{x_0}) \subset Q$, so $y \in Q$. 
 It follows that $\langle v - x_0, y - x_0 \rangle \leq 0$, so $\angle yx_0v \geq {\pi}/{2}$.  Then, $\angle x_0vy < {\pi}/{2}$.  Note that $v \neq u$ since
    \begin{align*}
    \dist(\{v\}, F_i) &\leq \dist (\{w\}, F_i) \text{, by definition of $v \in [u, w]$} \\
                 &< \dist(\{u\}, F_i) \text{, since $\dist (\{w\}, F_i) \leq \|w-x'\| < \dist(\{u\}, F_i)$}.
    \end{align*}

So, there must exist a point in $[u, v)$ that is closer to $x_0$ than $v$ is.  This contradicts the selection of $v \in [u, w]$, completing the proof.
\end{proof}

\begin{figure}[h]
\begin{center}
\begin{tikzpicture}[scale=0.75]
\draw[dashed] (0,0,0) -- (0,5,0);
\draw (0,5,0) -- (5,5,0);
\draw (5,5,0) -- (5,0,0);
\draw[dashed] (0,0,0) -- (5,0,0);

\draw (0,0,5) -- (0,5,5) -- (5,5,5) -- (5,0,5) -- cycle;
	
\draw[dashed] (0,0,0) -- (0,0,5);
\draw (0,5,0) -- (0,5,5);
\draw (5,0,0) -- (5,0,5);
\draw (5,5,0) -- (5,5,5);

\draw[dashed] (1.25,5,2.5) arc (170:10:1.25cm and 0.4cm);
\draw (1.25,5,2.5) arc (-170:-10:1.25cm and 0.4cm);

\draw[dashed] (1.25,0,2.5) arc (170:10:1.25cm and 0.4cm);
\draw (1.25,0,2.5) arc (-170:-10:1.25cm and 0.4cm);

\draw (1.25, 5, 2.5) -- (2.5, 8, 2.5);
\draw (3.72, 5, 2.5) -- (2.5, 8, 2.5);
\draw (1.25, 0, 2.5) -- (2.5, 7, 2.5);
\draw (3.72, 0, 2.5) -- (2.5, 7, 2.5);

\node at (0, 4, 2.5) {\textbullet};
\node at (0, 3.75, 2.5) {$p_1$};
\node at (5, 4, 2.5) {\textbullet};
\node at (5, 3.75, 2.5) {$p_2$};

\node at (2.5,7.5,5) {\textbf{A}};
\node at (2.5,4,5) {\textbf{B}};

\draw[fill=blue!10] (9,0,5) -- (9,5,5) -- (10.25,5,5) -- (11.5,8,5) -- (12.75,5,5) -- (14,5,5) -- (14,0,5) -- (12.75,0,5) -- (11.5,7,5) -- (10.25,0,5) -- cycle;

\node at (9,4,5) {\textbullet};
\node at (9,3.75,5) {$p_1$};
\node at (14,4,5) {\textbullet};
\node at (14,3.75,5) {$p_2$};

\draw[dashed] (9,4,5) -- (12.75,7,5);
\draw[dashed] (14,4,5) -- (10.25,7,5);
\end{tikzpicture}
    
\caption{Gallery $K$ is defined to be the contractible region lying outside of cone $B$ and inside the union of the cube and cone $A$ on the left.  Take $p_1$ and $p_2$ to be two points on $\partial K$.  No points in $V_{p_1} \cap V_{p_2}$ lie in the plane containing $p_1$, $p_2$ and the center of the cube's bottom face shown in the cross section of $K$ on the right. 
 So, this plane induces a separation of $V_{p_1} \cap V_{p_2}$.  It follows that $V_{p_1} \cap V_{p_2}$ is disconnected and thus not contractible.}\label{fig: colorful counter 1}
\label{fig:higher-dim}
\end{center}
\end{figure}
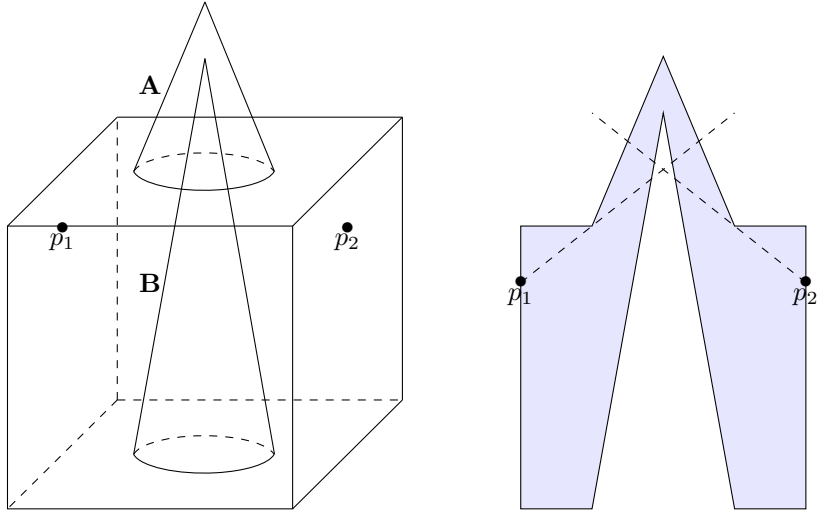

\section{Quantitative Krasnosselsky Results} \label{sec:quantitative}

Now we introduce the quantitative versions of Krasnosselsky's theorem. 
 Given a family of sets $\ff$ in $\rr^d$, we say that there is a Krasnosselsky theorem for $\ff$ if there exists a positive integer $n$ such that the following holds.\\

\emph{Let $K \subset \rr^d$ be compact.  Suppose that for any $n$ points in $K$, there exists a set $F \in \mathcal{F}$ so that $F \subset K$ and every point in $F$ sees each of the $n$ given points.  Then, there exists a set $F' \in \mathcal{F}$ such that $F' \subset K$ and every point in $F'$ sees all of $K$.}\\

If there is a Krasnosselsky theorem for $\ff$, we denote by $k_{\ff}$ the smallest value of $n$ for which the statement above holds.  We begin by showing general properties of $\ff$ that imply the existence of a Krasnosselsky theorem for $\ff$.  We then present results related to specific families that give rise to exact volumetric and diameter Helly-type theorems.  These results are closely related to recent quantitative Helly theorems \cites{Sarkar2021, Dillon2021}.



\begin{definition}
    Let $\ff$ be a family of sets in $\rr^d$.  We say that $\ff$ has a \textit{Krasnosselsky parametrization} in $\rr^l$ if there exists a convex subset $C \subset \rr^l$ and a function $D: C \to \ff$ such that
    \begin{itemize}
        \item The function $D$ is surjective.
        \item For all $a,b \in C$ and $\lambda \in [0,1]$, we have $D(\lambda a + (1-\lambda)b) \subset \lambda D(a) \oplus (1-\lambda)D(b)$, where $\oplus$ denotes the Minkowski sum in $\rr^d$.
    \end{itemize}
\end{definition}

The second condition on Krasnosselsky parametrizations has been used before to prove the existence of large families of quantitative Tverberg theorems \cite{Sarkar2021}.  We now show a general colorful quantitative Krasnosselsky theorem.

\begin{theorem}\label{thm-quant-krassno-general}
    Let $d,l$ be positive integers.  Let $\ff$ be a family of sets in $\rr^d$ that has a Krasnosselsky parametrization in $\rr^l$.  Let $K \subset \rr^d$ be compact and $F_1, \dots, F_{l+1}$ be closed subsets of $K$.  Suppose that for every choice $f_1 \in F_1, \dots, f_{l+1} \in F_{l+1}$ there exists a set $F \in \ff$ such that $\conv(\{f_i\} \cup F) \subset F_i$ for each $i \in [l+1]$.

    Then, there exist some $i \in [l+1]$ and a set $F \in \ff$ such that the segment between any point of $F_i$ and any point of $F$ is contained in $F_i$.
\end{theorem}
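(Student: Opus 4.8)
The plan is to lift the whole hypothesis into the parameter space $\R^l$, run a colorful Helly argument there, and then reduce to the geometric core already established in the proof of \cref{colorful krasnoselskii non-discrete}. For each $i \in [l+1]$ and each $x \in F_i$, set $V^{(i)}_x = \{y \in \R^d : [x,y] \subset F_i\}$, exactly as in that proof; since $F_i \subset K$ is compact these are compact, and each $\conv(V^{(i)}_x)$ is a compact convex subset of $\R^d$ containing $x$. The genuinely new objects live in the parameter space: for each such $x$ put
\[
C^{(i)}_x = \{c \in C : D(c) \subset \conv(V^{(i)}_x)\} \subset \R^l.
\]
The first thing I would verify is that each $C^{(i)}_x$ is convex, and this is precisely where the second axiom of a Krasnosselsky parametrization is used. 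If $a,b \in C^{(i)}_x$ and $\lambda \in [0,1]$, then $D(\lambda a + (1-\lambda)b) \subset \lambda D(a) \oplus (1-\lambda)D(b) \subset \lambda \conv(V^{(i)}_x) \oplus (1-\lambda)\conv(V^{(i)}_x) = \conv(V^{(i)}_x)$, where the final equality holds because $\conv(V^{(i)}_x)$ is convex. Hence $\lambda a + (1-\lambda)b \in C^{(i)}_x$, so $C^{(i)}_x$ is convex.

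Next I would translate the hypothesis into a colorful transversal condition for the families $\mathcal{C}_i = \{C^{(i)}_x : x \in F_i\}$, $i \in [l+1]$. Given any $f_1 \in F_1, \dots, f_{l+1} \in F_{l+1}$, the hypothesis yields $F = D(c) \in \ff$ (using surjectivity of $D$) with $\conv(\{f_i\} \cup F) \subset F_i$ for every $i$; in particular every $y \in D(c)$ satisfies $[f_i,y] \subset \conv(\{f_i\}\cup F) \subset F_i$, so $D(c) \subset V^{(i)}_{f_i} \subset \conv(V^{(i)}_{f_i})$ and therefore $c \in \bigcap_{i=1}^{l+1} C^{(i)}_{f_i}$. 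Thus every transversal of $\mathcal{C}_1, \dots, \mathcal{C}_{l+1}$ has a common point. Applying the infinite colorful Helly theorem in $\R^l$—which requires exactly $l+1$ color classes, matching the number of sets $F_1,\dots,F_{l+1}$—produces an index $i$ and a point $c^\ast \in \bigcap_{x \in F_i} C^{(i)}_x$.

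By the definition of the $C^{(i)}_x$, this point satisfies $D(c^\ast) \subset \bigcap_{x \in F_i} \conv(V^{(i)}_x)$. To finish I would invoke the geometric argument already carried out for \cref{colorful krasnoselskii non-discrete}, which shows \emph{pointwise} that $\bigcap_{x \in F_i} \conv(V^{(i)}_x) \subset \bigcap_{x \in F_i} V^{(i)}_x$; that nearest-point/obtuse-angle computation takes place in $\R^d$ and does not refer to the parametrization, so it applies verbatim. Consequently $D(c^\ast) \subset \bigcap_{x \in F_i} V^{(i)}_x$, i.e.\ for every $x \in F_i$ and every $y \in D(c^\ast)$ we have $[x,y] \subset F_i$. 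Taking $F = D(c^\ast) \in \ff$ gives exactly the asserted conclusion.

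The step I expect to be the main obstacle is the application of the \emph{infinite} colorful Helly theorem in the parameter space, which demands that the convex sets $C^{(i)}_x$ be compact (or closed and contained in a fixed bounded region). Convexity is automatic from the Minkowski inclusion above, but boundedness requires controlling $C$ and closedness requires an upper-semicontinuity property of $D$. I would handle this by noting that every image $D(c)$ relevant to the argument lies inside the fixed compact set $\conv(K)$, so one may restrict $D$ to a compact convex subset of $C$ still surjecting onto the pertinent part of $\ff$, and by using the closedness of the visibility sets $V^{(i)}_x$ to obtain closedness of the $C^{(i)}_x$. Pinning down these regularity conditions—essentially a mild continuity/compactness requirement on the parametrization—is the only delicate point; with it in place, the colorful Helly step and the inherited geometric lemma complete the proof.
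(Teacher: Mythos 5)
Your proposal is correct and follows essentially the same route as the paper: lift the hypothesis to the parameter space, apply the colorful Helly theorem there, use the Minkowski-sum axiom of the parametrization, and finish with the containment $\bigcap_{x\in F_i}\conv(V^{(i)}_x)\subset\bigcap_{x\in F_i}V^{(i)}_x$ established in the proof of the colorful Krasnosselsky theorem. The only (cosmetic) difference is that you define the parameter-space sets via $\conv(V^{(i)}_x)$ and prove they are convex directly, whereas the paper defines them via $V^{(i)}_x$, takes convex hulls, and invokes the same axiom when unpacking the convex combination at the end.
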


\begin{proof}
For each $x \in F_i$, let $V^{(i)}_x$ be the visibility set of $x$ in $F_i$.  In concrete terms, $V^{(i)}_x = \{y \in F_i : [x,y] \subset F_i\}$.  We know from the proof of Theorem \ref{colorful krasnoselskii non-discrete} that 
\[
\displaystyle \bigcap_{x \in F_i} \conv (V^{(i)}_x) \subset \bigcap_{x \in F_i}V^{(i)}_x.
\]
So, it is sufficient to prove that there exists an $i \in [l+1]$ and an $F \in \ff$ such that $F \subset \bigcap_{x \in F_i} \conv (V^{(i)}_x)$.

For each $i \in [l+1]$ and $x \in F_i$, let $C^i(x)$ be the set of elements in $\ff$ that $x$ can see completely in $F_i$.  Formally, 
\[
C^i(x) = \left\{y \in \rr^l: D(y) \subset V^{(i)}_x\right\}.
\]
For each $i \in [l+1]$, also let $\mathcal{G}_i = \{\conv(C^i(x)) : x \in F_i\}\subset \rr^l$.  These are families of convex sets in $\rr^l$.  The condition of the problem implies that if we pick an element from each $\mathcal{G}_i$, the resulting collection will have non-empty intersection.

Therefore, by the colorful Helly theorem in $\rr^l$, there exists some $i \in [l+1]$ such that $\bigcap \mathcal{G}_i$ has non-empty intersection. 
 Let $y$ be a point in $\bigcap \mathcal{G}_i$.  We will show that $D(y) = F \in \ff$ is the set we are looking for.

Take $x \in F_i$.  We want to show that $F \subset \conv (V^{(i)}_{x})$.  We know that $y \in \conv(C^i(x))$, so there must be some $y_1, \dots, y_n \in C^i(x)$ and coefficients of a convex combination $\alpha_1, \dots, \alpha_n$ such that $y = \alpha_1 y_1 + \dots + \alpha_n y_n$.  Note that $y_k \in C^i(x)$ means that $D(y_k) \subset V^{(i)}_x \subset \conv (V^{(i)}_x)$.  Then,
\begin{align*}
F  = D(y) & = D(\alpha_1 y_1 + \dots + \alpha_n y_n)
\\ & \subset \alpha_1 D(y_1) \oplus \dots \oplus \alpha_n D(y_n) \\
& \subset \alpha_1 \conv (V^{(i)}_x) \oplus \dots \oplus \alpha_n \conv (V^{(i)}_x)  \\
&= \conv (V^{(i)}_x).
\end{align*}
The last equality is due to the convexity of $\conv (V^{(i)}_x)$ and concludes the proof.
\end{proof}

\begin{corollary}
Let $\ff$ be a family of sets in $\rr^d$ that has a Krasnosselsky parametrization in $\rr^l$.  Then, $k_{\ff} \le l+1$.
\end{corollary}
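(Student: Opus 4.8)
The plan is to derive the corollary directly from \cref{thm-quant-krassno-general} by collapsing all the color classes onto a single set. Concretely, given a compact set $K \subset \rr^d$ satisfying the Krasnosselsky condition for $n = l+1$, I would apply \cref{thm-quant-krassno-general} with the choice $F_1 = F_2 = \cdots = F_{l+1} = K$. Since the parametrization lives in $\rr^l$, the theorem is phrased with exactly $l+1$ color classes, so this substitution is admissible and should produce the monochromatic conclusion we want.

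First I would check that the conclusion of \cref{thm-quant-krassno-general} specializes correctly. With every $F_i$ equal to $K$, the theorem yields an index $i$ (immaterial here, since all the classes agree) and a set $F \in \ff$ such that the segment between any point of $K$ and any point of $F$ lies in $K$. Assuming $K$ is nonempty, this forces $F \subset K$, and it says precisely that every point of $F$ sees every point of $K$. Setting $F' = F$ gives exactly the conclusion demanded by the definition of $k_{\ff}$, so the two conclusions match with no loss.

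The step that actually requires care is verifying that the hypothesis of the Krasnosselsky condition implies the hypothesis of \cref{thm-quant-krassno-general}. The Krasnosselsky condition gives, for any $l+1$ points $f_1, \dots, f_{l+1} \in K$, a set $F \in \ff$ with $F \subset K$ and $[y, f_i] \subset K$ for every $y \in F$ and every $i$; the theorem instead demands $\conv(\{f_i\} \cup F) \subset K$ for each $i$. These two statements coincide exactly when $F$ is convex: for convex $F$ one has $\conv(\{f_i\} \cup F) = \bigcup_{y \in F} [f_i, y]$, and each such segment lies in $K$ by the visibility hypothesis, while $F \subset K$ accounts for the base of the cone. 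I therefore expect the convexity of the members of $\ff$—which holds for every parametrized family we study, such as balls, ellipsoids, and volume- or diameter-constrained convex bodies—to be the hinge of the argument, and I would make it the standing assumption on $\ff$ so that the two hypotheses are genuinely equivalent.

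Once this translation is in place, the corollary follows immediately: the hypothesis of the Krasnosselsky condition for $n=l+1$ feeds into \cref{thm-quant-krassno-general}, whose conclusion is the conclusion of that condition, and so $k_{\ff} \le l+1$ is precisely the assertion that the Krasnosselsky theorem for $\ff$ holds with $n = l+1$. I anticipate no further obstacle beyond cleanly recording the convexity reduction described above.
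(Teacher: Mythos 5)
Your proof is correct and is essentially identical to the paper's, which simply sets $F_1 = \dots = F_{l+1} = K$ in \cref{thm-quant-krassno-general}. The convexity issue you flag---that the Krasnosselsky condition only gives $F \subset K$ together with visibility of each $f_i$ from every point of $F$, whereas the theorem's hypothesis asks for $\conv(\{f_i\} \cup F) \subset K$---is a genuine mismatch that the paper's one-line proof silently elides; your resolution via convexity of the members of $\ff$ is sound and covers every family treated in the paper, though one could alternatively note that the proof of \cref{thm-quant-krassno-general} only ever uses the weaker condition $F \subset V^{(i)}_{f_i}$, so the corollary holds as stated even without that assumption.
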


\begin{proof}
    Let $K$ be a compact set in $\rr^d$.  It suffices to consider $F_1 = \dots = F_{l+1}=K$ to obtain the desired result.
\end{proof}

\subsubsection{\textbf{Axis Parallel Boxes}}

Let us showcase an application of \cref{thm-quant-krassno-general} with axis-parallel boxes.  We only state the non-colorful versions here, although \cref{thm-quant-krassno-general} implies colorful versions as well.

\begin{theorem}\label{boxes}
Let $d$ be a positive integer and $K \subset \rr^d$ be compact.  Suppose that for any $2d$ points in $K$, there exists an axis parallel box with volume $1$ so that every point in the box sees each of the given $2d$ points.  Then, there exists an axis-parallel box with volume $1$ so that every point in the box sees all of $K$.
\end{theorem}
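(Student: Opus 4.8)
The plan is to deduce this directly from the Corollary following \cref{thm-quant-krassno-general}: if I can show that the family $\ff$ of axis-parallel boxes in $\rr^d$ of volume exactly $1$ admits a Krasnosselsky parametrization in $\rr^{2d-1}$, then $k_{\ff} \le (2d-1)+1 = 2d$, which is precisely the statement (taking $F_1 = \dots = F_{2d} = K$). So the entire content reduces to building the parametrization, i.e.\ finding a convex set $C \subset \rr^{2d-1}$ and a surjection $D : C \to \ff$ satisfying $D(\lambda a + (1-\lambda)b) \subset \lambda D(a)\oplus(1-\lambda)D(b)$.

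\textbf{The parametrization.} An axis-parallel box is $\prod_{i=1}^d [c_i - w_i/2,\, c_i + w_i/2]$, determined by its center $c = (c_1,\dots,c_d) \in \rr^d$ and its positive widths $w_1, \dots, w_d$. The key move is to replace the widths by their logarithms $s_i = \log w_i$, which linearizes the volume constraint: $\vol = \prod_i w_i = \exp(\sum_i s_i)$, so $\vol = 1$ is exactly the hyperplane $\sum_i s_i = 0$. I would therefore set $C = \{(c,s) \in \rr^d \times \rr^d : \sum_{i=1}^d s_i = 0\}$, a $(2d-1)$-dimensional affine subspace (hence convex, and affinely isomorphic to $\rr^{2d-1}$), and define $D(c,s) = \prod_{i=1}^d [\, c_i - e^{s_i}/2,\ c_i + e^{s_i}/2\,]$. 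Surjectivity onto $\ff$ is immediate: any volume-$1$ box has widths $w_i$ with $\prod w_i = 1$, so $s_i = \log w_i$ satisfies $\sum s_i = 0$. Since the Minkowski condition and convexity only involve convex combinations, they transfer through the affine identification $\rr^{2d-1} \cong C$, so I may work with $C$ directly.

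\textbf{The Minkowski condition.} The Minkowski sum of boxes adds centers and widths coordinatewise, so $\lambda D(c,s)\oplus(1-\lambda)D(c',s')$ is the box with center $\lambda c + (1-\lambda)c'$ and width $\lambda e^{s_i} + (1-\lambda)e^{s_i'}$ in coordinate $i$. On the other hand, $D(\lambda(c,s)+(1-\lambda)(c',s'))$ has the \emph{same} center $\lambda c + (1-\lambda)c'$ and width $e^{\lambda s_i + (1-\lambda)s_i'}$. Convexity of the exponential gives $e^{\lambda s_i + (1-\lambda)s_i'} \le \lambda e^{s_i} + (1-\lambda)e^{s_i'}$ in every coordinate, so the left-hand box has the same center but no larger a width in each direction, whence it is contained in the right-hand box. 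This verifies the required inclusion and completes the parametrization.

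\textbf{Remark on the obstacle.} The only genuinely creative step is discovering the logarithmic coordinates that simultaneously (i) convexify the volume constraint into a single linear equation and (ii) make the Minkowski inequality fall out of convexity of $\exp$. The sharpness of the count $2d$ hinges on using volume \emph{exactly} $1$: this is a hyperplane $\sum s_i = 0$ of dimension $2d-1$, giving $k_{\ff}\le 2d$, whereas relaxing to volume $\ge 1$ would enlarge $C$ to a half-space of dimension $2d$ and weaken the bound to $2d+1$. Everything else is routine bookkeeping, so I expect no serious difficulty beyond setting up these coordinates correctly.
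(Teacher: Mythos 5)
Your proof is correct and takes essentially the same route as the paper: exhibit a Krasnosselsky parametrization of the volume-one axis-parallel boxes in $\rr^{2d-1}$ and invoke the corollary to \cref{thm-quant-krassno-general} to conclude $k_{\ff}\le 2d$. The only difference is in the choice of coordinates — the paper uses a corner plus $d-1$ free edge lengths with the last length $1/(l_1\cdots l_{d-1})$ and cites Brunn--Minkowski for the inclusion, whereas you use the center plus log-widths on the hyperplane $\sum_i s_i=0$ and get the inclusion from convexity of $\exp$ coordinatewise, which is an equally valid and arguably more transparent verification.
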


\begin{proof}
We will show that the set of axis-parallel boxes of volume $1$ has a Krasnosselsky parametrization in $\rr^{2d-1}$.  Let $C$ be the set of points in $\rr^{2d-1}$ that have the last $d-1$ coordinates positive.  Given a point $p=(x_1,\dots, x_d, l_1, \dots, l_{d-1}) \in C$, we consider $l_d = 1/(l_1l_2\dots l_{d-1})$ and define $\boxx(p)$ as the axis-parallel box with corners at $(x_1, \dots, x_d)$ and $(x_1+l_1, \dots, x_d + l_d)$.

A direct application of the Brunn--Minkowski inequality shows that $\boxx(\lambda a + (1-\lambda)b) \subset \lambda \boxx(a) \oplus (1-\lambda)\boxx(b)$.
\end{proof}

An analogous approach shows the same result for 
sum of axis lengths.

\begin{theorem}\label{boxes2}
Let $d$ be a positive integer and $K \subset \rr^d$ be compact.  Suppose that for any $2d$ points in $K$, there exists an axis parallel box whose axis lengths sum to $1$ so that every point in the box sees each of the given $2d$ points.  Then, there exists an axis-parallel box whose axis lengths sum to $1$ so that every point in the box sees all of $K$.
\end{theorem}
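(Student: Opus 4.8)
The plan is to mirror the proof of \cref{boxes} exactly, replacing the volume constraint by the length-sum constraint and exhibiting a Krasnosselsky parametrization of the relevant family in $\rr^{2d-1}$. Let $\ff$ be the family of (possibly degenerate) axis-parallel boxes in $\rr^d$ whose side lengths sum to $1$. I would set $C = \{(x_1,\dots,x_d,l_1,\dots,l_{d-1}) \in \rr^{2d-1} : l_i \ge 0 \text{ and } \sum_{i=1}^{d-1} l_i \le 1\}$, which is a convex subset of $\rr^{2d-1}$, being the product of $\rr^d$ with a simplex. For a parameter $p = (x_1,\dots,x_d,l_1,\dots,l_{d-1}) \in C$, I would put $l_d = 1 - \sum_{i=1}^{d-1} l_i \ge 0$ and define $\boxx(p) = \prod_{i=1}^d [x_i, x_i + l_i]$. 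Surjectivity onto $\ff$ is immediate: any box with corner $(x_1,\dots,x_d)$ and side lengths $(l_1,\dots,l_d)$ summing to $1$ is the image of $(x_1,\dots,x_d,l_1,\dots,l_{d-1})$.

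The heart of the argument is the Minkowski-sum condition, and here the length-sum version is in fact cleaner than the volumetric one. The Minkowski sum of two axis-parallel boxes sharing the coordinate axes is again such a box, with corner the sum of the corners and side lengths the sum of the side lengths; scaling a box by $\lambda \ge 0$ scales its corner and side lengths by $\lambda$. Thus $\lambda \boxx(a) \oplus (1-\lambda)\boxx(b)$ has corner $\lambda x + (1-\lambda) x'$ and side lengths $\lambda l + (1-\lambda) l'$. On the other hand, because the constraint $\sum_i l_i = 1$ is affine, rather than multiplicative as in \cref{boxes}, the induced $d$-th length $l_d = 1 - \sum_{i=1}^{d-1} l_i$ is an affine function of the free parameters, so $\boxx(\lambda a + (1-\lambda) b)$ has exactly the same corner and side lengths. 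Hence $\boxx(\lambda a + (1-\lambda) b) = \lambda \boxx(a) \oplus (1-\lambda)\boxx(b)$, and the required containment holds with equality; no appeal to the Brunn--Minkowski inequality is needed.

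With the parametrization established in $\rr^{2d-1}$, the corollary following \cref{thm-quant-krassno-general} gives $k_{\ff} \le 2d$, which is precisely the statement of the theorem. The only point requiring care is the bookkeeping for the $d$-th side length: one must check that it remains nonnegative on $C$ and interpolates affinely, so that the equality $\boxx(\lambda a + (1-\lambda) b) = \lambda \boxx(a) \oplus (1-\lambda)\boxx(b)$ holds coordinate by coordinate. Since the whole parametrization $p \mapsto (\text{corner},\text{lengths})$ is affine, this is routine, and I expect no genuine obstacle analogous to the convexity inequality invoked in the volume case.
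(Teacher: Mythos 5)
Your proposal is correct and follows essentially the same route as the paper: the paper's proof of \cref{boxes2} likewise observes that the length-sum constraint is linear, so the natural parametrization in $\rr^{2d-1}$ satisfies $\boxx(\lambda a + (1-\lambda)b) = \lambda \boxx(a) \oplus (1-\lambda)\boxx(b)$ with equality and no appeal to Brunn--Minkowski, after which \cref{thm-quant-krassno-general} and its corollary give $k_{\ff} \le 2d$. Your write-up simply makes explicit the choice of the convex parameter domain and the affine bookkeeping that the paper leaves implicit.
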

\begin{proof}
In this case, we do not need to use the Brunn--Minkowski inequality, as the sum of axis lengths is linear in the space of axis-parallel boxes.  So, $\boxx(\lambda a + (1-\lambda)b) = \lambda \boxx(a) \oplus (1-\lambda)\boxx(b)$ with the natural parametrization. 
\end{proof}

\subsubsection{\textbf{Unit Spheres}}

\begin{theorem}
Let $K \subset \rr^d$ be compact.  Suppose that for any $d + 1$ points in $K$, there is unit sphere so that every point in the sphere sees all $d + 1$ points.  Then, there is a unit sphere so that every point in the sphere sees all of $K$.
\end{theorem}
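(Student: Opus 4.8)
The plan is to exhibit a Krasnosselsky parametrization for the family $\ff$ of unit spheres in $\rr^d$ using $\rr^d$ itself as the parameter space, and then to invoke the corollary to \cref{thm-quant-krassno-general} that $k_\ff \le l+1$. A unit sphere is completely determined by its center, so I would take $C = \rr^d$, which is convex, and let $D \colon \rr^d \to \ff$ send a point $c$ to the unit sphere $D(c) = \{x \in \rr^d : \|x - c\| = 1\}$. This map is plainly surjective onto the family of unit spheres, so the only condition requiring verification is the Minkowski inclusion $D(\lambda a + (1-\lambda) b) \subset \lambda D(a) \oplus (1-\lambda) D(b)$ for all $a, b \in \rr^d$ and $\lambda \in [0,1]$.

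This inclusion is the heart of the argument, and it reduces to a short computation. Scaling the unit sphere centered at $a$ by $\lambda$ yields the sphere of radius $\lambda$ centered at $\lambda a$, and likewise $(1-\lambda) D(b)$ is the sphere of radius $1-\lambda$ centered at $(1-\lambda) b$. Writing a point of the Minkowski sum as $(\lambda a + u) + ((1-\lambda) b + v)$ with $\|u\| = \lambda$ and $\|v\| = 1-\lambda$, and noting that $\|u+v\|$ ranges over all of $[\,|2\lambda-1|,\,1\,]$ as $u,v$ vary, one sees that
\[
\lambda D(a) \oplus (1-\lambda) D(b) = \left\{ z \in \rr^d : |2\lambda - 1| \le \|z - (\lambda a + (1-\lambda) b)\| \le 1 \right\},
\]
the closed annulus centered at $\lambda a + (1-\lambda) b$ with outer radius $\lambda + (1-\lambda) = 1$ and inner radius $|\lambda - (1-\lambda)|$. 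Since $D(\lambda a + (1-\lambda) b)$ is exactly the sphere of radius $1$ about that same center, it coincides with the outer boundary of this annulus and is therefore contained in it. This establishes the second condition of a Krasnosselsky parametrization.

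With the parametrization in hand, the family $\ff$ of unit spheres has a Krasnosselsky parametrization in $\rr^l$ with $l = d$, so the corollary to \cref{thm-quant-krassno-general} gives $k_\ff \le l + 1 = d + 1$. Unwinding the definition of $k_\ff$ with $n = d+1$ yields precisely the statement of the theorem. I expect the main (and essentially only) obstacle to be the verification that the Minkowski combination of two unit spheres contains the intermediate unit sphere; the observation that makes this routine is that scaling the two spheres by $\lambda$ and $1-\lambda$ and adding them produces an annulus whose outer radius is again $1$, so the inclusion is saturated at the outer boundary.
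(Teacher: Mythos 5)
Your proposal is correct and follows essentially the same route as the paper: both exhibit a Krasnosselsky parametrization of unit spheres by their centers in $\rr^d$ (the key step being the translation trick $c+w = \lambda(a+w)+(1-\lambda)(b+w)$, which is exactly how the paper verifies the Minkowski inclusion) and then invoke the corollary to \cref{thm-quant-krassno-general} with $l=d$. The only cosmetic difference is that the paper runs the computation over the whole closed ball $B(y^*,1)$ rather than just its boundary, and your extra observation that the Minkowski sum is the full annulus, while true for $d\ge 2$, is not needed—only containment of the outer sphere matters.
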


\begin{proof}
We will show that the set of unit spheres has a Krasnosselsky parametrization in $\rr^{d}$. Let $C = \mathbb{R}^d$. Given $y_1, y_2 \in C$, consider $y^* = \lambda y_1 + (1-\lambda)y_2$ for some $\lambda \in [0, 1]$ and $v \in B(y^*, 1)$, where $v = y^* + \dfrac{\alpha}{||z||}z$, $z \in \mathbb{R}^d$, and $\alpha \in [0, 1]$.

Then, $v = y^* + \dfrac{\alpha}{||z||}z = \lambda y_1 + (1-\lambda)y_2 + \dfrac{\alpha}{||z||}z = \lambda(y_1 + \dfrac{\alpha}{||z||}z) + (1-\lambda)(y_2 + \dfrac{\alpha}{||z||}z) \in \lambda B(y_1, 1) \oplus (1-\lambda)B(y_2, 1)$.
\end{proof}

\subsubsection{\textbf{Ellipsoids}}

The case of ellipsoids is delicate.  Ellipsoids with axis lengths that sum to one have a Krasnosselsky parametrization, so we obtain the following theorem.

\begin{theorem}
Let $K \subset \rr^d$ be compact.  Suppose that for any $\dfrac{d(d+3)}{2}$ points in $K$, there is an ellipsoid with axis lengths that sum to one so that every point in the ellipsoid sees all $\dfrac{d(d+3)}{2}$ points. Then, there is an ellipsoid with axis lengths that sum to one so that every point in the ellipsoid sees all of $K$.
\end{theorem}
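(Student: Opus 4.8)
The plan is to follow the template established by \cref{thm-quant-krassno-general} and its corollary: I will exhibit a Krasnosselsky parametrization of the family $\ff$ of ellipsoids whose axis lengths sum to one, realized in $\rr^{l}$ with $l = \frac{d(d+3)}{2}-1$, and then conclude immediately that $k_{\ff} \le l+1 = \frac{d(d+3)}{2}$, which is exactly the point count in the statement.

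First I would fix the parametrization. Every ellipsoid centered at the origin can be written uniquely as $MB$, where $B$ is the Euclidean unit ball and $M$ is a symmetric positive definite $d\times d$ matrix (via the polar decomposition, using $QB=B$ for orthogonal $Q$); the axis lengths of $MB$ are exactly the eigenvalues of $M$, so their sum equals $\tr(M)$. I therefore take $C$ to be the set of pairs $(c,M)$ with $c \in \rr^d$ and $M$ symmetric positive semidefinite satisfying $\tr(M)=1$, and set $D(c,M) = c + MB$. The space of symmetric $d \times d$ matrices has dimension $\frac{d(d+1)}{2}$; the linear constraint $\tr(M)=1$ removes one dimension, and the center adds $d$, so $C$ sits inside an affine space of dimension $d + \frac{d(d+1)}{2} - 1 = \frac{d(d+3)}{2}-1 = l$. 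Surjectivity of $D$ onto $\ff$ is precisely the uniqueness of the polar form together with translation.

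To verify the Minkowski containment in the definition, I would pass to support functions. For symmetric $M$ the support function of $MB$ in direction $u$ is $\|Mu\|$, and the support function of a Minkowski sum is the sum of the support functions. The triangle inequality then gives
\[
\|(\lambda M_a + (1-\lambda)M_b)u\| \le \lambda\|M_a u\| + (1-\lambda)\|M_b u\|
\]
for every $u$, which says exactly that $(\lambda M_a + (1-\lambda)M_b)B \subset \lambda M_a B \oplus (1-\lambda)M_b B$. Translating both bodies by the common center $\lambda c_a + (1-\lambda)c_b$ yields $D(\lambda a + (1-\lambda)b) \subset \lambda D(a) \oplus (1-\lambda)D(b)$. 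This is the analogue of the Brunn--Minkowski step used for boxes, now carried by subadditivity of the Euclidean norm.

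The point I expect to be delicate — and the reason the theorem is phrased for the sum of axis lengths rather than for volume — is the convexity of the domain $C$ and the preservation of the normalization under convex combinations. Both hinge entirely on $\tr(M)$ being linear: the positive semidefinite cone is convex, the level set $\{\tr(M)=1\}$ is a hyperplane, and if $\tr(M_a)=\tr(M_b)=1$ then $\tr(\lambda M_a + (1-\lambda)M_b)=1$. A volume normalization would instead impose $\det(M)=\mathrm{const}$, whose level set is neither affine nor convex, so the parametrization would break down entirely — this is exactly why volume-type ellipsoid results require a different, lossy approach. A secondary technical nuisance is allowing positive semidefinite rather than strictly positive definite matrices on the boundary of $C$, where ellipsoids degenerate; this only affects limiting cases and does not obstruct applying the corollary.
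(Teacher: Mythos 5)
Your proposal is correct and follows essentially the same route as the paper: parametrize ellipsoids of fixed axis-length sum by a center in $\rr^d$ together with a symmetric positive definite matrix of fixed trace, note that this convex domain lives in $\rr^{d(d+3)/2-1}$, and invoke the general Krasnosselsky-parametrization machinery to get the bound $k_{\ff}\le d(d+3)/2$. The only difference is that you explicitly verify the Minkowski-containment condition via support functions and the triangle inequality, a step the paper asserts without proof, so your write-up is if anything more complete.
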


\begin{proof}
Since an ellipsoid is an affine image of a sphere, it follows that any ellipsoid can be represented by $a + XB_d$, where $B_d$ denotes the unit sphere in $\rr^d$, $X$ is non-singular, and $a \in \rr^d$.  We can find a polar decomposition $X = AQ$, where $A$ is a symmetric positive definite matrix and $Q$ is orthogonal.  Then, $XB_d = AQB_d = AB_d$.  Thus, any ellipsoid can be represented by $(a, A) \in \rr^{d(d+3)/2}$.  The sum of the lengths of the axes of $a +AB_d$ is equal to $2d\tr(A)$.

Let $P_d$ denote the space of symmetric, positive definite $d \times d$ matrices.  The set of positive definite matrices with trace equal to $1/2d$ is homeomorphic to $A \subset \rr^{d(d+1)/2 - 1}$ since $A$ is a co-dimension $1$ subspace of $P_d$.  The map $(a,A) \mapsto a + A B_d$ is a Krassnolesky parametrization of ellipsoids with unit axis length sum.


\end{proof}

Ellipsoids of unit volume do not have a direct Krasnosselsky parametrization, so we need a work-around.  We present an approach different from the one used by Sarkar et al. \cite{Sarkar2021}.

\begin{theorem}
Let $K \subset \rr^d$ be compact.  Suppose that for any $\dfrac{d(d+3)}{2}$ points in $K$, there is a unit volume ellipsoid so that every point in the ellipsoid sees all $\dfrac{d(d+3)}{2}$ points.  Then, there is a unit volume ellipsoid so that every point in the ellipsoid sees all of $K$.
\end{theorem}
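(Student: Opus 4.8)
The plan is to exhibit a Krasnosselsky parametrization of the unit-volume ellipsoids in $\rr^{d(d+3)/2-1}$ and then invoke the corollary to \cref{thm-quant-krassno-general}, which would give $k_{\ff}\le d(d+3)/2$. Writing an ellipsoid as $a+AB_d$ with $a\in\rr^d$ and $A$ symmetric positive definite, the unit-volume condition is $\det A = c$ for the constant $c = 1/\vol(B_d)$. The obstruction — and the reason a direct parametrization fails — is that, in contrast to the axis-sum case where the constraint $\tr A=\text{const}$ is affine and the obvious map $(a,A)\mapsto a+AB_d$ is already a parametrization, the locus $\{\det A=c\}$ is a curved, non-convex hypersurface in the space of symmetric matrices. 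Thus the naive domain is not convex, and we need a work-around.

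The work-around I would pursue is to linearize the determinant constraint by passing to logarithmic matrix coordinates. Every symmetric positive definite $A$ is uniquely $A=e^{S}$ for a symmetric $S$, and $\det A = e^{\tr S}$; hence the unit-volume locus becomes the affine subspace $\{\,S\ \text{symmetric}:\tr S=\log c\,\}$, which is convex of dimension $d(d+1)/2-1$. Taking $C=\rr^d\times\{\,S:\tr S=\log c\,\}$ and $D(a,S)=a+e^{S}B_d$ gives a convex domain of dimension $d(d+3)/2-1$ together with a surjection onto the unit-volume ellipsoids, matching the target point count exactly. It then remains to verify the second axiom of a Krasnosselsky parametrization, namely $D(\lambda p+(1-\lambda)q)\subset \lambda D(p)\oplus(1-\lambda)D(q)$.

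Comparing support functions, and using that $h_{e^{S}B_d}(u)=|e^{S}u|$ for symmetric $S$, this containment reduces to the scalar inequality $|e^{\bar S}u|\le \lambda|e^{S_1}u|+(1-\lambda)|e^{S_2}u|$ for every unit vector $u$, where $\bar S=\lambda S_1+(1-\lambda)S_2$; that is, to the convexity of $S\mapsto|e^{S}u|$ on symmetric matrices. When $S_1$ and $S_2$ commute this is routine: diagonalizing simultaneously, it follows from convexity of the scalar exponential together with Minkowski's ($\ell^2$ triangle) inequality. The non-commuting case is the crux. My plan here is to prove the stronger statement that $S\mapsto\langle u,e^{2S}u\rangle$ is log-convex, which forces its square root to be convex and yields the inequality; I would seek this through an integral (Golden--Thompson type) representation of the matrix exponential.

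Should this operator-convexity step prove too delicate, I would fall back on the family of ellipsoids of volume at least $1$. Here Minkowski's determinant inequality makes $\det^{1/d}$ concave, so $\{A\succ 0:\det A\ge c\}$ is genuinely convex and $(a,A)\mapsto a+AB_d$ is an honest Krasnosselsky parametrization — but in $\rr^{d(d+3)/2}$, giving only $k\le d(d+3)/2+1$. The remaining, and essential, difficulty is then to remove this extra unit: one would exploit that the extremal ellipsoid produced by the colorful Helly step lies on the boundary $\{\det A=c\}$ (volume exactly $1$), and that the property ``every point sees all of $K$'' is preserved under shrinking the ellipsoid, effectively collapsing the ambient dimension of the Helly application from $d(d+3)/2$ to $d(d+3)/2-1$. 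I expect this dimension-shaving — equivalently, replacing the determinant constraint by a supporting hyperplane at the optimum — to be the true obstacle, since it is exactly the point at which the equality constraint must be converted into the one saved point.
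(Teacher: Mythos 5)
You correctly identify the real obstruction (the unit-volume locus $\{\det A = c\}$ is a curved hypersurface, so the naive domain is not convex), but both of your proposed routes stop exactly at the hard step. In the main route, everything reduces to the convexity of $S \mapsto \lvert e^{S}u\rvert$ on symmetric matrices, which you only establish in the commuting case; the non-commuting case is precisely the content you would need, and the log-convexity of $S \mapsto \langle u, e^{2S}u\rangle$ is a genuinely delicate operator inequality (naive Golden--Thompson-style attacks fail because the three-matrix Golden--Thompson inequality is false, and $M \mapsto \lvert Mu\rvert^2$ is not log-convex, so one cannot simply pass through the Lie--Trotter limit term by term). Without a proof of that inequality the parametrization $D(a,S)=a+e^{S}B_d$ is not verified to be a Krasnosselsky parametrization. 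In the fallback route, the relaxation to $\det(A)\ge c$ does give an honest Krasnosselsky parametrization, but only in $\rr^{d(d+3)/2}$, so it proves the theorem with hypothesis on $d(d+3)/2+1$ points --- a strictly weaker statement, since the hypothesis on $\tbinom{}{}$smaller tuples does not imply the one on larger tuples. Your proposed repair (``use that the extremal ellipsoid lies on the boundary and replace the determinant constraint by a supporting hyperplane at the optimum'') is circular as stated: the colorful/convex Helly argument does not single out an extremal point, and choosing a supporting hyperplane requires already knowing where the common point lies.

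The paper closes this exact gap by a different mechanism: it keeps the relaxed convex sets $C(x)=\{(a,A) : \det(A)\ge 1,\ a+AB_d'\subset \conv(V_x)\}$ (convex by log-concavity of $\det$ together with the Minkowski-sum property of $(a,A)\mapsto a+AB_d$), and then projects them radially onto the determinant-one slice via $(a,A)\mapsto (a, \det(A)^{-1/d}A)$. The projected sets are no longer convex, but they are contractible (point preimages are segments) and the projection commutes with intersections, so all finite intersections are empty or contractible; the \emph{topological} Helly theorem in the $(d(d+3)/2-1)$-dimensional slice then applies with Helly number exactly $d(d+3)/2$. This trades the convexity you were trying to preserve for contractibility plus topological Helly, which is what actually saves the one point. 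If you want to salvage your main route instead, you must supply a complete proof of the convexity of $S\mapsto\lvert e^{S}u\rvert$; that would yield a bona fide Krasnosselsky parametrization in $\rr^{d(d+3)/2-1}$ and, via \cref{thm-quant-krassno-general}, even a colorful version, which would be a genuinely different (and in some respects stronger) argument than the paper's --- but as written it is an assertion, not a proof.
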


\begin{proof}
    Denote by $B'_d$ the unit volume sphere in $\rr^d$ centered at the origin.  Let $Q_d \subset P_d$ be the set of symmetric, positive definite $d\times d$ matrices with determinant $1$.  Note that $\rr^d \times Q_d$ is homeomorphic to $\rr^{\frac{d(d+3)}{2}-1}$.  The map $(a, A)\to a+AB'_d$ parametrizes all ellipsoids of volume $1$ in $\rr^d$, but it is not a Krasnosselsky parametrization.

    To avoid this problem, we consider the set
     \[
   C(x) = \left \{ (a, A) \in \rr^d \times P_d \suchthat \det(A) \ge 1 \mbox{ and  }  a + AB_d' \subset \conv(V_x) \right \}.
    \]

    Since the determinant is log-concave in $P_d$, we get that $C(x)$ is a convex set.  Now, consider the map
    \begin{align*}
       \pi: \mathbb{R}^d \times P_d & \to \rr^d \times Q_d \\
       (a, A) & \mapsto \left(a, \frac{1}{\det (A)^{1/d}}A\right)
   \end{align*}

Note that $\pi (C(x)) \subset C(x)$ for all $x \in K$.  Suppose $M \subset \rr^d \times P_d$ is a convex set such that $\det(A) \ge 1$ for all $(a,A)\in M$.  If we restrict $\pi$ to $M$, the preimage of every point in the codomain of $\pi$ is a segment, which is contractible.  Therefore, $\pi (M)$ is homotopy equivalent to $M$ and also contractible.

  Consider the family
 \begin{align*}
     \mathcal{F} = \{\pi (C(x)) : x \in K \}.
 \end{align*}
Note that given a set $K' \subset K$, we have
\[
\bigcap_{x\in K'} \pi ( C(x))=\pi \left(\bigcap_{x\in K'}  C(x)\right) 
\]
This means that $\bigcap_{x\in K'} \pi ( C(x)) \subset \rr^d \times Q_d$ is either empty or contractible.

The conditions of the problem imply that every $d(d+3)/2$ or fewer sets in $\mathcal{F}$ have a non-empty intersection.  By the topological Helly theorem in dimension $(d(d+3)/2 )-1$ \cite{Helly:1930hk, Kalai:2005tb}, there must exist a point in the intersection of all of $\ff$. 

 This implies the existence of a volume one ellipsoid in $\bigcap_{x \in K} \conv (V_x)$, and we can conclude as before.

\end{proof}

To prove \cref{thm:volume-version}, we need a slight variation of the argument above.

\begin{proof}[Proof of \cref{thm:volume-version}]
    We follow the proof above, but instead of using $C(x)$, we use the set
    \[
   C'(x) = \left \{ (a, A) \subset \rr^d \times P_d \suchthat \det(A) \ge 1/d \mbox{ and  }  a + AB_d' \subset \conv(V_x) \right \}.
    \]
    Since $\vol (V_x) \ge 1$ for all $x$, we have $\vol (\conv (V_x)) \ge 1$.  Using John's ellipsoids (see, e.g., \cite{Ball:1997ud}), a convex set of volume $1$ contains an ellipsoid of volume $d^{-d}$.  This means that the family $\ff = \{\pi (\conv C'(x))\}$ satisfies that any $d(d+3)/2$ of its elements have a non-empty intersection, and we can conclude as before.
\end{proof}







\subsection{Diameter Results}\label{subsec:diameter}
Here, we present a few Krasnosselsky-type diameter results.  These results are concerned with formulating sufficient conditions to guarantee that $K \subset \rr^d$ can be seen by each point along some line segment.  We show that earlier methods to prove diameter Helly-type results \cite{Dillon2021} apply in the art gallery context.

\subsubsection{\textbf{V-width}}

\begin{definition}
Given a non-zero vector $v \in \rr^d$ and a compact set $M \subset \rr^d$, the $v$-width of $M$ is defined as $\max_{x, y \in M} \langle x - y, v \rangle$.
\end{definition}

\begin{theorem}
Let $v$ be a non-zero vector in $\rr^d$, and let $K \subset \rr^d$ be compact.  Suppose that for any $2d$ points in $K$, there is a line segment with $v$-width greater than or equal to 1 so that every point on the segment sees all $2d$ points.  Then, there is a line segment with $v$-width greater than or equal to 1 so that every point on the segment sees all of $K$.
\end{theorem}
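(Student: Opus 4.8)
The plan is to fit this into the Krasnosselsky-parametrization framework and invoke the general bound $k_{\ff} \le l+1$ that follows from \cref{thm-quant-krassno-general}. The family we ultimately care about is $\ff_{\ge 1}$, the set of segments in $\rr^d$ of $v$-width at least $1$; the target count $2d$ suggests we should produce a parametrization in $\rr^{2d-1}$. The constraint ``$v$-width $\ge 1$'' is not convex in the space of endpoint pairs, so first I would replace it by the constraint ``$v$-width $=1$''. This is harmless: if for some $2d$ points of $K$ there is a segment $S \subset K$ of $v$-width at least $1$ all of whose points see the $2d$ points, then any subsegment $S' \subset S$ of $v$-width exactly $1$ (which exists because $v$-width is proportional to length along $S$ and the full $v$-width is $\ge 1$) also lies in $K$, and each of its points still sees the $2d$ points. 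Hence the hypothesis for $\ff_{\ge 1}$ transfers to the hypothesis for $\ff_{=1} := \{\text{segments of } v\text{-width exactly } 1\}$, and a segment of $v$-width exactly $1$ produced at the end trivially has $v$-width at least $1$.

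Next I would build a Krasnosselsky parametrization of $\ff_{=1}$ in $\rr^{2d-1}$. Parametrize a segment by its ordered pair of endpoints and take
\[
C = \{(p,q) \in \rr^d \times \rr^d : \langle p-q, v\rangle = 1\}, \qquad D(p,q) = [p,q].
\]
The set $C$ is an affine hyperplane of $\rr^{2d}$, hence convex and affinely isomorphic to $\rr^{2d-1}$. Surjectivity onto $\ff_{=1}$ holds because any segment of $v$-width $1$ has endpoints with $\langle p-q,v\rangle = \pm 1$, and relabeling the two endpoints lets us assume the value $+1$. For the Minkowski condition, a direct computation shows that for $a=(p_1,q_1)$, $b=(p_2,q_2)$ in $C$ and $\lambda \in [0,1]$, every point $t(\lambda p_1 + (1-\lambda)p_2) + (1-t)(\lambda q_1 + (1-\lambda)q_2)$ of $D(\lambda a + (1-\lambda)b)$ equals $\lambda(tp_1 + (1-t)q_1) + (1-\lambda)(tp_2 + (1-t)q_2)$, which lies in $\lambda D(a) \oplus (1-\lambda)D(b)$; the same computation shows $C$ is closed under convex combinations, so $D$ is well defined on $C$.

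With the parametrization in hand, the general bound gives $k_{\ff_{=1}} \le (2d-1)+1 = 2d$, which together with the hypothesis transfer of the first step yields a segment of $v$-width $1 \ge 1$ contained in $K$ every point of which sees all of $K$. The only genuinely delicate point is the reduction from $v$-width $\ge 1$ to $v$-width $=1$: it is what lets us use the convex hyperplane $C$ (saving one dimension and matching the count $2d$ rather than $2d+1$) instead of the non-convex slab $\{|\langle p-q,v\rangle| \ge 1\}$. The Minkowski inclusion itself is routine here—unlike the box case it needs no Brunn--Minkowski input, since the Minkowski average of two segments already contains the averaged segment as a diagonal.
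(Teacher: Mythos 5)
Your proof is correct and follows essentially the same route as the paper: both exhibit the segments of $v$-width exactly $1$ as the image of a Krasnosselsky parametrization defined on a $(2d-1)$-dimensional affine subspace (the paper uses the pair $(y,z)$ with segment $[y,y+z]$ and $\langle z,v\rangle=1$, an affine change of variables away from your endpoint pair $(p,q)$) and then invoke the general parametrization machinery to get the Helly number $2d$. Your explicit reduction from $v$-width $\ge 1$ to $v$-width $=1$ and your verification of the Minkowski-sum inclusion are details the paper leaves implicit, but they do not change the argument.
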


\begin{proof}
For each $x \in K$, define
\begin{align*}
    C(x) &= \left \{(y, z) \in \rr^d \times \rr^d \suchthat [y, y + z] \subset V_x \text{ and } \langle z, v \rangle = 1 \right \}. 
\end{align*}

Note that $C(x) \subset \rr^{2d-1}$ due to the constraint $\langle z, v \rangle = 1$.  This is a Krasnosselsky parametrization of unit $v$-width segments, which concludes the proof.
\end{proof}

\subsubsection{\textbf{Minkowski norm}}

\begin{definition}
Given a compact convex set $K \subset \rr^d$ with nonempty interior that is symmetric about the origin, the Minkowski norm of $K$ is defined as 
\begin{align*}
    \rho_K(x) = \min\{t \geq 0 : x \in tK \}.
\end{align*}
\end{definition}

\begin{theorem}
\label{minkowski}
Let $\rho$ be a Minkowski norm in $\rr^d$ whose unit ball is a polytope $P$ with $k$ facets, and let $K \subset \rr^d$ be compact.  Suppose that for any $kd$ points in $K$, there is a line segment with $\rho$-norm greater than or equal to 1 so that every point on the segment sees all $kd$ points. 
 Then, there is a line segment with $\rho$-norm greater than or equal to 1 so that every point on the segment sees all of $K$.
\end{theorem}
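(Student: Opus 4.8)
The plan is to mirror the template of \cref{thm-quant-krassno-general}, but to confront the fact that the family of segments of $\rho$-norm at least $1$ does \emph{not} admit a single Krasnosselsky parametrization: the set of admissible directions $\{z \in \rr^d : \rho(z)\ge 1\}$ is the complement of the open unit ball $\inter P$, hence is not convex. The remedy is to split this set along the facets of $P$. First, I would reduce the conclusion to a statement about the diameter of the intersection of the convex hulls of the visibility regions. Writing $V_x$ for the visibility region of $x$ in $K$, the original Krasnosselsky argument reproduced in the proof of \cref{colorful krasnoselskii non-discrete} gives $\bigcap_{x\in K}\conv(V_x)\subset \bigcap_{x\in K}V_x$, so any segment contained in $W:=\bigcap_{x\in K}\conv(V_x)$ automatically sees all of $K$. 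Thus it suffices to show that $W$ contains a segment of $\rho$-norm at least $1$, i.e. that $W$ has $\rho$-diameter at least $1$.

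Next, I would exploit the polytopal structure of the unit ball. Writing $P=\{z:\langle z,u_j\rangle\le 1,\ j\in[k]\}$ for the facet normals $u_j$, one has $\rho(z)=\max_j\langle z,u_j\rangle$, so the $\rho$-diameter of a convex set $C$ equals $\max_j w_j(C)$, where $w_j(C)=\max_{a,b\in C}\langle a-b,u_j\rangle$ is the width of $C$ in direction $u_j$. Because $P=-P$, the facets come in antipodal pairs and width is sign-symmetric, so there are only $m=k/2$ distinct width directions. For each such direction I would reuse the V-width parametrization from the theorem proved just above, applied with $v=u_j$: the sets
\[
A_j(x)=\{(y,z)\in\rr^d\times\rr^d : y,\ y+z\in\conv(V_x),\ \langle z,u_j\rangle=1\}
\]
are compact and convex, live in the hyperplane $\{\langle z,u_j\rangle=1\}\cong\rr^{2d-1}$, and satisfy, for every $S\subseteq K$, that $\bigcap_{x\in S}A_j(x)\neq\emptyset$ exactly when $w_j\big(\bigcap_{x\in S}\conv(V_x)\big)\ge 1$ (the forward direction is immediate, and the converse follows by rescaling a witnessing segment so that $\langle z,u_j\rangle=1$, which keeps both endpoints in the convex set $\bigcap_{x\in S}\conv(V_x)$).

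Finally I would run Helly in $\rr^{2d-1}$ for each of the $m$ directions and combine. Suppose, for contradiction, that $W$ has $\rho$-diameter less than $1$; then $w_j(W)<1$ for every direction, i.e. $\bigcap_{x\in K}A_j(x)=\emptyset$ for each $j\in[m]$. By Helly's theorem in $\rr^{2d-1}$ for the compact convex sets $A_j(x)$, for each $j$ there is a set $T_j\subset K$ with $|T_j|\le 2d$ and $\bigcap_{x\in T_j}A_j(x)=\emptyset$. Setting $T=\bigcup_{j=1}^m T_j$, we get $|T|\le 2dm=kd$ and $\bigcap_{x\in T}A_j(x)=\emptyset$ for every $j$, so $\bigcap_{x\in T}\conv(V_x)$ has $\rho$-diameter less than $1$; consequently no segment of $\rho$-norm at least $1$ can lie in $\bigcap_{x\in T}V_x$, i.e. no such segment sees all of $T$. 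This contradicts the hypothesis applied to the at most $kd$ points of $T$ (padding $T$ to exactly $kd$ points if needed), completing the argument.

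The step I expect to be the crux is the bookkeeping that converts the non-convex norm constraint into $m=k/2$ separate convex width problems and thereby yields exactly $kd$, rather than $2dk$, test points: it is the antipodal pairing of the facets of the symmetric body $P$ that makes the constant in the statement correct, and keeping track of the passage between $V_x$ and $\conv(V_x)$ (so that the Krasnosselsky lemma can upgrade a segment in the convex hull to genuine visibility) is the other place where care is required.
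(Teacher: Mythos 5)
Your proof is correct, and while it lands in the same place as the paper's, it gets there by a more self-contained route. The paper also parametrizes candidate segments by pairs $(y,z)\in\rr^{2d}$ and reduces, via the inclusion $\bigcap_{x\in K}\conv(V_x)\subset\bigcap_{x\in K}V_x$ established in the proof of \cref{colorful krasnoselskii non-discrete}, to locating a suitable point in $\bigcap_{x\in K}\conv(C(x))$ over $\partial P$; but it then obtains the Helly number $kd$ by invoking \cref{Dillon} (from Dillon--Sober\'on) as a black box on finite subfamilies, followed by a compactness step. You instead re-derive the content of that lemma in situ: you slice the non-convex constraint $\rho(z)\ge 1$ into the $k/2$ affine hyperplanes $\langle z,u_j\rangle=1$ indexed by antipodal facet pairs, check that each slice $A_j(x)$ is a compact convex subset of a copy of $\rr^{2d-1}$, prove the equivalence between nonemptiness of $\bigcap_{x\in S}A_j(x)$ and $w_j\bigl(\bigcap_{x\in S}\conv(V_x)\bigr)\ge 1$, and run classical Helly plus compactness in each slice, recombining $k/2$ witness sets of size $2d$ into the $kd$ test points. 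Your antipodal pairing plays exactly the role of the symmetry hypothesis $(x,y)\mapsto(x+y,-y)$ in the cited lemma, so the two arguments rest on the same decomposition; what yours buys is transparency about where the constant $kd=2d\cdot(k/2)$ comes from and independence from the external reference, at the cost of running in contrapositive form rather than directly exhibiting the segment. All the delicate points --- the passage between $V_x$ and $\conv(V_x)$, the rescaling in the converse direction of your equivalence, and the compactness needed to apply Helly to an infinite family --- are handled correctly.
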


\begin{proof}
For each $x \in K$, define
\begin{align*}
     C(x) &= \left \{(y, z) \in \rr^{2d} \suchthat [y, y + z] \subset V_x \text{ and }\rho(z) = 1 \right \}.
\end{align*}

Consider the family of convex sets $F = \{\conv(C(x))\}_{x \in K}$ in $\rr^{2d}$.  By the theorem hypothesis, we know that the intersection of any $kd$ members of this family contains a point in $\rr^d \times \partial P$. 
 We will make use of the following lemma to determine that $\bigcap F$ contains a point in $\rr^d \times \partial P$. 

\begin{lemma}[\cite{Dillon2021}]
\label{Dillon}
Let $P \subset \rr^d$ be a centrally symmetric polytope with $k$ facets and $\mathcal{G}$ be a finite family of convex sets in $\rr^{2d}$ such that 
\begin{itemize}
    \item $K \cap (\rr^d \times L)$ is convex for every facet $L$ of $P$ and every $K \in \mathcal{G}$.
    \item If $x, y \in \rr^d, K \in \mathcal{G}$, and $(x, y) \in K$, then $(x + y, -y) \in K$. 
\end{itemize}
If the intersection of every $kd$ or fewer sets in $\mathcal{G}$ contains a point in $\rr^d \times \partial P$, then $\cap \mathcal{G}$ contains a point in $\rr^d \times \partial P$.
\end{lemma}

Note that Lemma \ref{Dillon} requires that our family of convex sets be finite.  If we consider the family of compact, convex sets $F' = \{\conv(C(x)) \cap (\rr^d \times \partial P)\}_{x \in K}$, we know that any finite subfamily of $F'$ has nonempty intersection by this lemma.  So, by a basic property of compactness, the entire family $F'$ has nonempty intersection.

Choose $(y, z)$ in $\bigcap F$ and $\rr^d \times \partial P$.  We know that the segment $[y, y + z]$ has $\rho$-diameter greater than or equal to 1 since $\rho(z) = 1$.  For each $x \in K$, there exist coefficients $\alpha_1, \ldots, \alpha_n$ of a convex combination such that 
\begin{align*}
    (y, z) &= \sum_i \alpha_i(y_i, z_i) \text{, where $(y_i, z_i) \in C(x)$}.
\end{align*}
It follows that $[y, y + z] \subset \bigcap_{x \in K} \conv(V_x)$.  From which, $[y, y + z] \subset \bigcap_{x \in K} V_x$.
\end{proof}

\begin{theorem}
Let $K \subset \rr^d$ be compact.  Suppose that for any $2d^2$ points in $K$, there is a line segment with $\ell_p$-norm greater than or equal to 1 so that every point on the segment sees all $2d^2$ points.  Then, there is a line segment with $\ell_p$-norm greater than or equal to $d^{-1/p}$ so that every point on the segment sees all of $K$.
\end{theorem}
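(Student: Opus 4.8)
The plan is to reduce this to \cref{minkowski}, whose proof applies to Minkowski norms whose unit ball is a centrally symmetric \emph{polytope}. The $\ell_p$ ball is not a polytope for $1 < p < \infty$, so that theorem does not apply directly; the idea is to replace it by the cube $Q = [-1,1]^d$, which is the unit ball of $\ell_\infty$ and has exactly $2d$ facets. With $k = 2d$ facets, the quantity $kd$ appearing in \cref{minkowski} becomes $2d^2$, matching the hypothesis here.

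First I would record the standard comparison of norms: for every $z \in \rr^d$,
\[
d^{-1/p}\|z\|_p \le \|z\|_\infty \le \|z\|_p .
\]
The right inequality holds since $\max_i |z_i| \le \left(\sum_i |z_i|^p\right)^{1/p}$, and the left one since $\sum_i |z_i|^p \le d\,\max_i |z_i|^p$. Next I would rescale the cube. Let $\rho'(z) = d^{1/p}\|z\|_\infty$, so the unit ball of $\rho'$ is the scaled cube $d^{-1/p}Q$, still a centrally symmetric polytope with $2d$ facets, and $\rho'(z) \ge 1$ is equivalent to $\|z\|_\infty \ge d^{-1/p}$.

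Using the left inequality, any segment with $\ell_p$-norm at least $1$ has direction vector $z$ satisfying $\|z\|_\infty \ge d^{-1/p}\|z\|_p \ge d^{-1/p}$, i.e.\ $\rho'$-norm at least $1$. Hence the hypothesis of the present theorem implies that for any $2d^2$ points of $K$ there is a segment of $\rho'$-norm at least $1$ all of whose points see those $2d^2$ points. Applying \cref{minkowski} to $\rho'$ with $k = 2d$ then produces a segment $[y, y+z]$ of $\rho'$-norm at least $1$ — equivalently $\|z\|_\infty \ge d^{-1/p}$ — each of whose points sees all of $K$. Finally, the right inequality gives $\|z\|_p \ge \|z\|_\infty \ge d^{-1/p}$, upgrading this to $\ell_p$-norm at least $d^{-1/p}$, which is exactly the desired conclusion.

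The argument is essentially a bookkeeping reduction, so I do not expect a deep obstacle; the only thing to get right is the direction of each norm inequality and the resulting loss factor $d^{-1/p}$, which is precisely the $\ell_p$-versus-$\ell_\infty$ equivalence constant and explains why it shows up in the statement. It is also worth noting that the cube is the right sandwiching polytope because it has only $2d$ facets: using the cross-polytope (the $\ell_1$ ball) would yield $2^d$ facets and a far worse point count. One should additionally verify that the scaled cube meets the hypotheses of \cref{Dillon} invoked inside \cref{minkowski} — central symmetry and the facet count — but this is immediate for an axis-parallel cube.
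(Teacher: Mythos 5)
Your argument is correct and matches the paper's proof: both reduce to \cref{minkowski} via the cube (the $\ell_\infty$ unit ball, $2d$ facets, hence $kd = 2d^2$ points), using $d^{-1/p}\|z\|_p \le \|z\|_\infty \le \|z\|_p$ to pass to $\ell_\infty$ and back. The only difference is that you make the rescaling of the norm explicit via $\rho'$, whereas the paper applies \cref{minkowski} at scale $d^{-1/p}$ implicitly; this is purely cosmetic.
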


\begin{proof}
A set with $\ell_p$-diameter at least 1 has $\ell_{\infty}$-diameter at least $d^{-1/p}$.  Since the unit ball in the $\ell_{\infty}$ norm is a polytope with $2d$ facets, we can apply Theorem \ref{minkowski} to conclude that there exists a line segment that can see all of $K$ with $\ell_{\infty}$-diameter greater than or equal to $d^{-1/p}$.  It follows that the $\ell_{p}$-diameter of this segment is at least $d^{-1/p}$ as well.
\end{proof}

It was shown in \cite{Dillon2021} that only a Minkowski norm whose unit ball is a polytope admits an exact Helly-type theorem for diameter.  It would be interesting to explore whether the same is true in the Krasnosselsky context.\\

\subsection{Non-Exactness Examples}

In this section, we construct the examples for \cref{thm:non-exactness}. 
 Most examples showing that quantitative Helly-type theorems for volume or diameter are optimal or cannot be made exact rely on considering a family of half-spaces whose intersection is close to a unit sphere in Hausdorff distance.  We cannot make the visibility regions of points resemble such examples, so a more intricate construction is needed.

The goal is to construct a gallery $K \subset \rr^d$ which will look like a large ball (say, of radius $M'$).  Its boundary will have several spikes, so that the only set of points that can see the tip of each spike is close to a unit volume ball inside the gallery.  If done correctly, any small number $n$ of points in $K$ can be visible by a set of volume greater than $1+\delta$, where $\delta>0$ will only depend on $n$ and dimension $d$. Figure \ref{fig:non-exact} shows this construction in the plane.

\begin{proof}[Proof of \cref{thm:non-exactness}]Let $S^{d-1}$ denote the unit sphere centered at the origin in $\rr^d$, let $B'_d$ denote the ball of volume $1$ centered at the origin in $\rr^d$, and let $B_d$ denote the ball of radius $1$ centered at the origin in $\rr^d$.  Let $M'<M$ be two large real numbers, both larger than the radius of $B'_d$.  We consider $M'$ and $M$ to be very close to each other as well.  For each $v \in S^{d-1}$, let $C_v$ be the convex cone with apex $Mv$ over $B'_d$ (all its rays start at $Mv$ and go through a point of $B'_d$).  Let $n$ be a positive integer.  Now, consider the continuous function
\begin{align*}
	f: \left( S^{d-1}\right)^n & \to \rr^d \\
	(v_1, \dots, v_n) & \mapsto \vol \left( M'B_d \cap \left(\bigcap_{i=1}^n C_{v_i} \right)\right).
\end{align*}

For any $v \in S^{d-1}$, the boundaries of $C_v$ and $B'_d$ are only tangent on a set of dimension $(d-2)$.  Therefore, for any $v_1, \ldots, v_n \in S^{d-1}$, we have that $B'_d$ is a strict subset of $\bigcap_{i=1}^n C_{v_i}$.  This implies that $f(v_1,\ldots, v_n) >1$. 
 Since $f$ is a continuous function and $\left( S^{d-1}\right)^n$ is compact, it attains a minimum $m>1$.  

Let $\varepsilon = 1/2 - 1/(2m) > 0$.  We have $1/m = 1-2\varepsilon$.  Let $\delta >0$ be a sufficiently small real number such that $(1-2\varepsilon)(1+\delta) < (1-\varepsilon)$.  Let $S \subset S^{d-1}$ be a finite set such that
\[
\vol \left(\bigcap_{v \in S} C_v\right) = 1 + \delta \quad \mbox{and} \quad \bigcap_{v \in S} C_v \subset M' B_d.
\]
Now, we are ready to construct our gallery $K$.  For each $v \in S$, let $D_v = \conv (\{Mv\} \cup B'_d) \subset C_v$.  We define
\[
K = M'B_d \cup \left(\bigcup_{v \in S} D_v\right).
\]
This is a ball of radius $M'$ with many ``spikes'' around it.  For each $v \in S$, the points in $K$ that can see $Mv$ are precisely $C_v \cap K$. 
 Therefore, the set of points that can see all of $K$ is contained in  $\bigcap_{v \in S} C_v$ and must therefore have volume at most $1+\delta$.

Now, given any point $x \in K$, if $x \in C_v$, the visibility region $V_x$ satisfies $V_x \supset C_v \cap M' B_d$.  If $x \not\in C_v$ for any $v \in S$, then $V_x \supset M'B_d$.  Therefore, the volume of the visibility regions of any $n$ points in $K$ is bounded below by the volume of the visibility regions of $n$ points of $S$.  This, in turn, is bounded below by $m$.  Finally, we take the set $m^{-1/d}K$.  In this new gallery, for each $n$ points, there exists a set of volume at least $1$ that can see all of them.  Yet, the volume of the set of points that can see the entire set is at most $(1+\delta)/m = (1+\delta)(1-2\varepsilon) < (1-\varepsilon)$, as we wanted.

\begin{figure}[h]
\begin{center}
\scalebox{0.60}{
\begin{tikzpicture}

\filldraw[yellow!10] (0, 0) circle (3cm);

\filldraw[yellow!10] (3.732, 1) -- (2.828, 1) -- (2.95, 0.55) -- cycle;
\filldraw[yellow!10] (1, 3.732) -- (0.55, 2.95) -- (1, 2.828) -- cycle;
\filldraw[yellow!10] (-1, 3.732) -- (-0.55, 2.95) -- (-1, 2.828) -- cycle;
\filldraw[yellow!10] (-3.732, 1) -- (-2.828, 1) -- (-2.95, 0.55) -- cycle;
\filldraw[yellow!10] (-1, -3.732) -- (-1, -2.828) -- (-0.55, -2.95) -- cycle;
\filldraw[yellow!10] (1, -3.732) -- (1, -2.828) -- (0.55, -2.95) -- cycle;
\filldraw[yellow!10] (-3.732, -1) -- (-2.828, -1) -- (-2.95, -0.55) -- cycle;
\filldraw[yellow!10] (3.732, -1) -- (2.828, -1) -- (2.95, -0.55) -- cycle;

\filldraw[blue!10] (1, 1) -- (-0.577, 1) -- (-1.366, -0.366) -- (0, -1.1547) -- (1, -0.577) -- cycle;

\draw (0, 0) circle (1cm);
\draw (0, 0) circle (3cm);
\draw[line width=0.3mm, dotted] (0, 0) circle (3.864cm);

\filldraw [black] (0, 0) circle (1pt);
\filldraw [black] (0, 1) circle (1pt);
\filldraw [black] (0.5, -0.866) circle (1pt);
\filldraw [black] (-0.5, -0.866) circle (1pt);
\filldraw [black] (1, 0) circle (1pt);
\filldraw [black] (-0.866, 0.5) circle (1pt);

\draw[dotted] (0, 1) -- (3.732, 1);
\draw[dotted] (-1.5, -2.021) -- (3.732, 1);

\draw[dotted] (0, 1) -- (-3.732, 1);
\draw[dotted] (1.5, -2.021) -- (-3.732, 1);

\draw[dotted] (1, -2.3) -- (1, 3.732);
\draw[dotted] (-2, -1.464) -- (1, 3.732);

\filldraw [black] (3.732, 1) circle (1pt);
\filldraw [black] (-3.732, 1) circle (1pt);
\filldraw [black] (-1, 3.732) circle (1pt);
\filldraw [black] (1, 3.732) circle (1pt);
\filldraw [black] (-1, -3.732) circle (1pt);
\filldraw [black] (1, -3.732) circle (1pt);

\draw (-1, 3.732) -- (-0.55, 2.95);
\draw (-1, 3.732) -- (-1, 2.828);
\draw (-1, -3.732) -- (-1, -2.828);
\draw (-1, -3.732) -- (-0.55, -2.95);

\draw (1, 3.732) -- (0.55, 2.95);
\draw (1, 3.732) -- (1, 2.828);
\draw (1, -3.732) -- (1, -2.828);
\draw (1, -3.732) -- (0.55, -2.95);

\draw (3.732, 1) -- (2.828, 1);
\draw (3.732, 1) -- (2.95, 0.55);
\draw (-3.732, 1) -- (-2.828, 1);
\draw (-3.732, 1) -- (-2.95, 0.55);

\filldraw [black] (-3.732, -1) circle (1pt);
\draw (-3.732, -1) -- (-2.828, -1);
\draw (-3.732, -1) -- (-2.95, -0.55);

\filldraw [black] (3.732, -1) circle (1pt);
\draw (3.732, -1) -- (2.828, -1);
\draw (3.732, -1) -- (2.95, -0.55);

\filldraw [black] (-0.15, 3.4) circle (0.5pt);
\filldraw [black] (0, 3.4) circle (0.5pt);
\filldraw [black] (0.15, 3.4) circle (0.5pt);

\filldraw [black] (-0.15, -3.4) circle (0.5pt);
\filldraw [black] (0, -3.4) circle (0.5pt);
\filldraw [black] (0.15, -3.4) circle (0.5pt);

\filldraw [black] (-3.4, 0.15) circle (0.5pt);
\filldraw [black] (-3.4, 0) circle (0.5pt);
\filldraw [black] (-3.4, -0.15) circle (0.5pt);

\filldraw [black] (3.4, 0.15) circle (0.5pt);
\filldraw [black] (3.4, 0) circle (0.5pt);
\filldraw [black] (3.4, -0.15) circle (0.5pt);

\filldraw [black] (2.6, 2.35) circle (0.5pt);
\filldraw [black] (2.45, 2.5) circle (0.5pt);
\filldraw [black] (2.7, 2.2) circle (0.5pt);

\filldraw [black] (-2.6, 2.35) circle (0.5pt);
\filldraw [black] (-2.45, 2.5) circle (0.5pt);
\filldraw [black] (-2.7, 2.2) circle (0.5pt);

\filldraw [black] (2.6, -2.35) circle (0.5pt);
\filldraw [black] (2.45, -2.5) circle (0.5pt);
\filldraw [black] (2.7, -2.2) circle (0.5pt);

\filldraw [black] (-2.6, -2.35) circle (0.5pt);
\filldraw [black] (-2.45, -2.5) circle (0.5pt);
\filldraw [black] (-2.7, -2.2) circle (0.5pt);
\end{tikzpicture}}
\caption{Example of a gallery given by Theorem \ref{thm:non-exactness} in the plane.}
\label{fig:non-exact}
\end{center}
\end{figure}
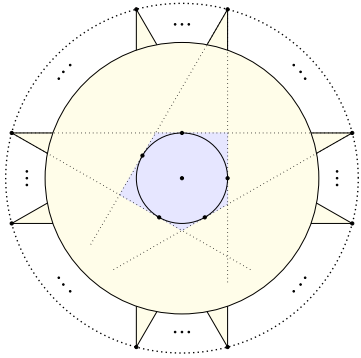
\end{proof}

\section{Future Directions and Remarks}\label{sec:remarks}

\subsection{Optimality of quantitative Krasnosselsky theorems.}

It is not clear whether the quantitative results we obtain in \cref{sec:quantitative} are optimal.  Specifically, determining the optimal dependence of $k_\mathcal{F}$ on the dimension $d$ for some family $\mathcal{F}$ in our quantitative Krasnosselsky theorems is of particular interest.  It is worth noting that the Helly number $2d$ in \cref{boxes}’s corresponding volumetric Helly theorem is optimal \cite{Xue2021}, as is the Helly number $kd$ in \cref{minkowski}’s corresponding diameter Helly theorem \cite{Dillon2021}.  It would be interesting to investigate whether such optimal quantitative Helly theorems correspond to art gallery problems that are optimal as well or that are better.  Identifying a quantitative Krasnosselsky theorem that is better than its Helly analogue would demonstrate an interesting difference between these two problems. 

\subsection{Colorful and fractional quantitative Krasnosselsky theorems.}

\cref{thm-quant-krassno-general} gives a general colorful quantitative Krasnosselsky theorem for families $\mathcal{F}$ that admit a Krasnosselsky parametrization.  Such results, however, are artificial in the context of the art gallery problem, so it is worth investigating whether there are other colorful quantitative versions of Krasnosselsky’s theorem that are more natural in the art gallery context.  Making \cref{thm-colorful-plane-krassno} quantitative and extending it to arbitrary dimension $d$, for example, would provide such a result.

\begin{problem}
Let $K \subset \mathbb{R}^d$ be compact and simply connected, and $P_1, \ldots, P_{2d}$ be finite subsets of $K$.  If for every choice $p_1 \in P_1, \ldots, p_{2d} \in P_{2d}$, we know there exists an axis parallel box of volume one so that every point in the box sees each of the $2d$ points, does there exist an index $i \in [2d]$ and an axis parallel box of volume one so that every point in the box sees all of $P_i$?
\end{problem}

Fractional versions of our quantitative results would provide insight into situations where only some $k_{\mathcal{F}}$-sized sets of points in a compact set $K$ can be seen by a set $F \in \mathcal{F}$, where $\mathcal{F}$ is some family of sets in $\mathbb{R}^d$.  A fractional variant of Krasnosselsky's theorem has been proven for planar galleries \cite{Barany:2006jh}.  We might consider attempting to make this fractional variant quantitative for different families of sets and for galleries of arbitrary dimension $d$.

\begin{problem}
For every $\alpha \in (0, 1)$ and positive integer $d$, does there exist a constant $\beta = \beta(\alpha, d)$ for which the following statement holds: Let $K \subset \mathbb{R}^d$ be compact and $A \subset K$ be an $n$-point set such that for at least $\alpha \binom{n}{2d}$ of the $2d$-tuples of points of $A$, there is an axis parallel box of volume one so that every point in the box sees each of the $2d$ points.  Then, there is an axis parallel box of volume one so that every point in the box sees at least $\beta n$ points of $A$.
\end{problem}

\subsection{Integer variants of Krasnosselsky’s theorem.}

It could be interesting to explore integer variants of Krasnosselsky’s theorem based on an integer version of Helly’s theorem proved by Jean-Paul Doignon in 1973 that states \textit{if $\mathcal{F}$ is a finite family of convex sets in $\mathbb{R}^d$ such that the intersection of every $2^d$ or fewer sets in $\mathcal{F}$ contains a point of $\mathbb{Z}^d$, then $\bigcap{F}$ contains a point of $\mathbb{Z}^d$} \cite{Doignon:1973ht}. 
 Interpreting this in the art gallery context leads to the following.

\begin{problem}
Let $K \subset \mathbb{R}^d$ be compact.  Suppose that for any $2^d$ points in $K$, there exists an integer point $x \in K$ that seems them all.  Then, does there exist an integer point $x \in K$ that sees all the points in $K$?
\end{problem}

\begin{bibdiv}
\begin{biblist}

\bib{Amenta:2017ed}{book}{
      author={Amenta, Nina},
      author={Loera, Jesús A.~De},
      author={Soberón, Pablo},
       title={{Helly’s theorem: New variations and applications}},
      series={American Mathematical Society},
   publisher={American Mathematical Society},
        date={2017},
      volume={685},
}

\bib{Barany1982}{article}{
      author={B\'{a}r\'{a}ny, Imre},
       title={A generalization of {C}arath\'{e}odory's theorem},
        date={1982},
        ISSN={0012-365X},
     journal={Discrete Math.},
      volume={40},
      number={2-3},
       pages={141\ndash 152},
         url={https://doi.org/10.1016/0012-365X(82)90115-7},
}

\bib{Ball:1997ud}{article}{
      author={Ball, Keith~M.},
       title={{An elementary introduction to modern convex geometry}},
        date={1997},
     journal={Flavors of geometry},
}

\bib{Bertschinger2022}{incollection}{
      author={Bertschinger, Daniel},
      author={El~Maalouly, Nicolas},
      author={Miltzow, Tillmann},
      author={Schnider, Patrick},
      author={Weber, Simon},
       title={Topological art in simple galleries},
        date={2022},
   booktitle={5th {SIAM} {S}ymposium on {S}implicity in {A}lgorithms},
   publisher={[Society for Industrial and Applied Mathematics (SIAM)],
  Philadelphia, PA},
       pages={87\ndash 116},
}

\bib{Barany:1982ga}{article}{
      author={B{\'a}r{\'a}ny, Imre},
      author={Katchalski, Meir},
      author={Pach, J\'anos},
       title={{Quantitative Helly-type theorems}},
        date={1982},
     journal={Proc. American Math. Soc.},
      volume={86},
      number={1},
       pages={109\ndash 114},
}

\bib{Barany:2006jh}{article}{
      author={B{\'a}r{\'a}ny, Imre},
      author={Matoušek, Ji\v{r}\'i},
       title={{Berge's theorem, fractional Helly, and art galleries}},
        date={2006},
     journal={Discrete Mathematics},
      volume={306},
      number={19-20},
       pages={2303\ndash 2313},
}

\bib{Brazitikos:2016ja}{article}{
      author={Brazitikos, Silouanos},
       title={{Quantitative Helly-Type Theorem for the Diameter of Convex
  Sets}},
        date={2016},
     journal={Discrete \& Computational Geometry},
      volume={57},
      number={2},
       pages={494\ndash 505},
}

\bib{Breen1985}{incollection}{
      author={Breen, Marilyn},
       title={Krasnosel\cprime ski\u{\i}-type theorems},
        date={1985},
   booktitle={Discrete geometry and convexity ({N}ew {Y}ork, 1982)},
      series={Ann. New York Acad. Sci.},
      volume={440},
   publisher={New York Acad. Sci., New York},
       pages={142\ndash 146},
         url={https://doi.org/10.1111/j.1749-6632.1985.tb14548.x},
}

\bib{Breen:1990ez}{article}{
      author={Breen, Marilyn},
       title={{Starshaped unions and nonempty intersections of convex sets in
  $R^d$}},
        date={1990},
     journal={Proceedings of the American Mathematical Society},
      volume={108},
      number={3},
       pages={817\ndash 820},
}

\bib{Danzer:1963ug}{incollection}{
      author={Danzer, Ludwig},
      author={Gr{\"u}nbaum, Branko},
      author={Klee, Victor},
       title={{Helly's theorem and its relatives}},
        date={1963},
      series={Proc. Sympos. Pure Math., Vol. VII},
   publisher={Amer. Math. Soc., Providence, R.I.},
       pages={101\ndash 180},
}

\bib{Doignon:1973ht}{article}{
      author={Doignon, Jean-Paul},
       title={{Convexity in cristallographical lattices}},
        date={1973},
     journal={Journal of Geometry},
      volume={3},
      number={1},
       pages={71\ndash 85},
}

\bib{Dillon2021}{article}{
      author={Dillon, Travis},
      author={Sober\'{o}n, Pablo},
       title={A m\'{e}lange of diameter {H}elly-type theorems},
        date={2021},
        ISSN={0895-4801},
     journal={SIAM J. Discrete Math.},
      volume={35},
      number={3},
       pages={1615\ndash 1627},
         url={https://doi.org/10.1137/20M1365119},
}

\bib{FernandezVidal2022}{article}{
      author={Fernandez~Vidal, Tom\'{a}s},
      author={Galicer, Daniel},
      author={Merzbacher, Mariano},
       title={Continuous quantitative {H}elly-type results},
        date={2022},
        ISSN={0002-9939},
     journal={Proc. Amer. Math. Soc.},
      volume={150},
      number={5},
       pages={2181\ndash 2193},
         url={https://doi.org/10.1090/proc/15844},
}

\bib{Helly:1923wr}{article}{
      author={Helly, Eduard},
       title={{Über Mengen konvexer Körper mit gemeinschaftlichen Punkte.}},
        date={1923},
     journal={Jahresbericht der Deutschen Mathematiker-Vereinigung},
      volume={32},
       pages={175\ndash 176},
}

\bib{Helly:1930hk}{article}{
      author={Helly, Eduard},
       title={{\"Uber Systeme von abgeschlossenen Mengen mit gemeinschaftlichen
  Punkten}},
    language={German},
        date={1930},
     journal={Monatshefte f\"ur Mathematik und Physik},
      volume={37},
      number={1},
       pages={281\ndash 302},
         url={http://www.springerlink.com/content/p820128p27127342/},
}

\bib{Holmsen:2017uf}{incollection}{
      author={Holmsen, Andreas~F.},
      author={Wenger, Rephael},
       title={{Helly-type theorems and geometric transversals}},
        date={2017},
     edition={3},
      series={Handbook of Discrete and Computational Geometry},
   publisher={Chapman and Hall/CRC},
       pages={91\ndash 123},
}

\bib{Kalai:2005tb}{article}{
      author={Kalai, Gil},
      author={Meshulam, Roy},
       title={{A topological colorful Helly theorem}},
        date={2005},
        ISSN={0001-8708},
     journal={Advances in Mathematics},
      volume={191},
      number={2},
       pages={305\ndash 311},
}

\bib{Kalai1997}{article}{
      author={Kalai, Gil},
      author={Matou\v{s}ek, Ji\v{r}\'i},
       title={Guarding galleries where every point sees a large area},
        date={1997},
        ISSN={0021-2172},
     journal={Israel J. Math.},
      volume={101},
       pages={125\ndash 139},
         url={https://doi.org/10.1007/BF02760925},
}

\bib{Krasnosselsky1946}{article}{
      author={Krasnosselsky, M.},
       title={Sur un crit\`ere pour qu'un domaine soit \'{e}toil\'{e}},
        date={1946},
     journal={Rec. Math. [Mat. Sbornik] N. S.},
      volume={19(61)},
       pages={309\ndash 310},
}

\bib{Naszodi:2016he}{article}{
      author={Naszódi, Márton},
       title={{Proof of a Conjecture of Bárány, Katchalski and Pach}},
        date={2016},
     journal={Discrete \& Computational Geometry},
      volume={55},
      number={1},
       pages={243\ndash 248},
}

\bib{ORourke1987}{book}{
      author={O'Rourke, Joseph},
       title={Art gallery theorems and algorithms},
      series={International Series of Monographs on Computer Science},
   publisher={The Clarendon Press, Oxford University Press, New York},
        date={1987},
        ISBN={0-19-503965-3},
}

\bib{Radon:1921vh}{article}{
      author={Radon, Johann},
       title={{Mengen konvexer Körper, die einen gemeinsamen Punkt
  enthalten}},
        date={1921},
     journal={Mathematische Annalen},
      volume={83},
      number={1},
       pages={113\ndash 115},
}

\bib{Sarkar2021}{article}{
      author={Sarkar, Sherry},
      author={Xue, Alexander},
      author={Sober{\'o}n, Pablo},
       title={Quantitative combinatorial geometry for concave functions},
        date={2021},
     journal={Journal of Combinatorial Theory, Series A},
      volume={182},
       pages={105465},
}

\bib{Xue2021}{article}{
      author={Xue, Alexander},
      author={Sober\'{o}n, Pablo},
       title={Balanced convex partitions of lines in the plane},
        date={2021},
        ISSN={0179-5376},
     journal={Discrete Comput. Geom.},
      volume={66},
      number={3},
       pages={1150\ndash 1167},
         url={https://doi.org/10.1007/s00454-020-00257-1},
}

\end{biblist}
\end{bibdiv}

\end{document}